\definecolor{darkgreen}{rgb}{0,0.5,0}
\newcommand\sC{{\mathcal C}}
\newcommand\sE{{\mathcal E}}
\newcommand\sP{{\mathcal P}}
\newcommand\sI{{\mathcal I}}
\newcommand\sU{{\mathcal U}}
\newcommand\sL{{\mathcal L}}
\newcommand\sQ{{\mathcal Q}}
\newcommand\sN{{\mathcal N}}
\newcommand\sH{{\mathcal H}}
\newcommand\la{\lambda}
\newcommand\s{\sigma}
\newcommand\de{\delta}
\DeclareMathOperator{\grad}{grad}
\newcommand{\CC}{\ensuremath{\mathbb{C}}}
\newcommand{\RR}{\ensuremath{\mathbb{R}}}
\newcommand{\ZZ}{\ensuremath{\mathbb{Z}}}
\newcommand{\hol}{\ensuremath{\mathcal{O}}}
\newcommand{\BB}{\ensuremath{\mathbb{B}}}
\newcommand{\PP}{\ensuremath{\mathbb{P}}}
\newcommand{\ra}{\ensuremath{\rightarrow}}
\def\eea{\end{eqnarray*}}
\def\bea{\begin{eqnarray*}}
\newcommand\dual{\mathrel{\raise3pt\hbox{$\underline{\mathrm{\thinspace d
\thinspace}}$}}}
\newcommand\qe{\ifhmode\unskip\nobreak\fi\quad $\Box$}       
\def\BOX{\hfill\lower.5\baselineskip\hbox{$\Box$}}
\newtheorem{theorem}{Theorem}[section]
\newtheorem{lemma}[theorem]{Lemma}
\newtheorem{corollary}[theorem]{Corollary}
\newtheorem{proposition}[theorem]{Proposition}
\newtheorem{conjecture}[theorem]{Conjecture}
\newtheorem{question}[theorem]{Question}
\theoremstyle{remark}
\newtheorem{remark}[theorem]{Remark}
\theoremstyle{definition}
\newtheorem{definition}[theorem]{Definition}
\DeclareMathOperator{\rank}{rank}
\DeclareMathOperator{\GL}{GL}
\DeclareMathOperator{\im}{im}
\numberwithin{equation}{section}
 \newenvironment{dedication}
        {\begin{quotation}\begin{center}\begin{em}}
        {\par\end{em}\end{center}\end{quotation}}
\newcounter{nootje}
\renewcommand\check[1]
\begin{document}

\title[Rigid configurations and HK coverings]{Del Pezzo surfaces, rigid line configurations and Hirzebruch-Kummer coverings}

\author{Ingrid Bauer}
\address{Mathematisches Institut,
         Universit\"at Bayreuth,
         95440 Bayreuth, Germany.}
\email{Ingrid.Bauer@uni-bayreuth.de}

\author{Fabrizio Catanese}
\address{Mathematisches Institut,
         Universit\"at Bayreuth,
         95440 Bayreuth, Germany.}
\email{Fabrizio.Catanese@uni-bayreuth.de}
\date{\today}
\thanks{
 \textit{2010 Mathematics Subject Classification}: 14B12, 14J15, 14J29, 14J45,  14E20, 14F17, 
32G05, 32S15,  32J15, 52C35.\\
\textit{Keywords}: Rigid complex manifolds and varieties, branched  coverings, Hirzebruch Kummer coverings,
deformation theory, configurations of lines, Del Pezzo surfaces. \\
The present work took place in the  framework  of  
the ERC-2013-Advanced Grant - 340258- TADMICAMT.  Part of the work was done while the authors were guests at KIAS,
the second author as KIAS Research scholar.}

 \makeatletter
    \def\@pnumwidth{2em}
  \makeatother

\maketitle
\begin{dedication}
In beloved  memory of Paolo (De Bartolomeis)
\end{dedication}
\addtocontents{toc}{\protect\setcounter{tocdepth}{1}}

\tableofcontents

\section{Introduction}

The answers to some long standing open questions in algebraic geometry (\cite{volmax}, \cite{cat-dettCR},\cite{cat-dett1},\cite{cat-dett2})
turned out to be related with a construction of Hirzebruch of Galois coverings $X$ of the projective plane $\PP^2 $ (we stick for simplicity
to the case of the complex projective plane $ \PP^2_{\CC}$ even if the results are more general)
with group $(\ZZ/n)^r$, branched over the union $\sL$ of $(r+1)$ lines (we call $\sL$ a configuration of lines since we want  the incidence relation of the
lines of $\sL$ to be fixed
\footnote{a common notation, by which $\sL$ is  called an {\em arrangement} of lines, allows  the incidence relation of the lines not to be  fixed:
in our case, when  we speak of a given configuration, we want a fixed incidence relation}).

The minimal resolution $S$ of the singularities of such a covering $X$ is called the Hirzebruch-Kummer covering 
of  the plane, of exponent $n$,  associated to $\sL$,
and $S$ is denoted by $HK (n, \sL)$ (the name `singular HK covering' is instead reserved for $X$).

The simplest interesting example occurs when $r=5$ and $\sL$ is the complete quadrangle $\sC \sQ$, 
the union of the sides and of the medians of
a triangle (in other words, the six lines joining pairs of points of a projective basis $P_1, P_2, P_3, P_4$).

In this case $S (n) : = HK (n, \sC \sQ)$ is a smooth ramified Galois covering of the Del Pezzo surface $Y_5$ of degree $5$,
the blow-up of the plane in the points $P_1, P_2, P_3, P_4$.

For $n=5$, $S (5) : = HK (5, \sC \sQ)$ is a ball quotient (cf. \cite{bhh}), in particular $S(5)$ enjoys the
following properties:

\begin{enumerate}
\item
$S(5)$ is rigid;
\item
$S(5)$ admits a Hermitian metric of strongly negative curvature;
\item
$S(5)$ is  a projective classifying space (indeed $S(5)$ has a contractible universal cover $\tilde{S}(5) \cong \BB_2 : = \{z \in \CC^2 | |z| < 1\}$);
\item
the universal cover $\tilde{S}(5)$ of $S(5)$ is Stein. 
\end{enumerate}

The first two properties are the most interesting ones, and especially in general the second property implies 
the remaining two ((3) follows indeed  from the existence
of a Riemannian metric with seminegative sectional curvature).  

A natural question (\cite{rigid}) is whether these properties extend, for exponent $n$ suffciently large,
 to Hirzebruch-Kummer  coverings $S (n) : = HK (n, \sL)$ 
associated to rigid line configurations $\sL$.

Indeed, the property of rigidity of $S  : = HK (n, \sL)$ clearly implies the rigidity of the line configuration $\sL$.
Among the rigid line configurations which were used by Hirzebruch to construct explicit surfaces which are ball quotients 
are the complete quadrangle, the Hesse $(9_4, 12_3)$ configuration of the $9$ flexes of a smooth cubic curve,
and the twelve lines joining pairs of flexpoints, and the dual Hesse $(12_3, 9_4)$ configuration.

Motivated by these and other considerations, we analysed in \cite{rigid} the converse question in the particular case of $\sC \sQ$,
establishing the following theorem:

\begin{theorem}\label{CQ}
The surface $ HK(n, \sC \sQ)$ is rigid (indeed, infinitesimally rigid)  if and only if $ n \geq 4$.

\end{theorem}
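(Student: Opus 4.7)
The plan is to compute $H^1(S(n), T_{S(n)})$ via the abelian Galois cover $\pi: S(n) \to Y_5$ with group $G = (\ZZ/n)^5$, and to pin down the threshold $n=4$ at which this group vanishes. Write $B = B_1 + \cdots + B_{10}$ for the branch divisor on $Y_5$ — the ten $(-1)$-curves of the Del Pezzo in their Petersen configuration — and $R \subset S(n)$ for the reduced ramification. Because $B$ is simple normal crossing and $\pi$ is totally ramified of index $n$ along each $B_i$, one has the standard identification $T_{S(n)}(-\log R) \cong \pi^{*}\,T_{Y_5}(-\log B)$; Pardini's theorem on abelian covers then decomposes
\[
 H^{q}\bigl(S(n),\, T_{S(n)}(-\log R)\bigr) \;\cong\; \bigoplus_{\chi \in \widehat{G}} H^{q}\bigl(Y_5,\, T_{Y_5}(-\log B) \otimes L_\chi^{-1}\bigr),
\]
where each $L_\chi \in \Pic(Y_5)$ is determined explicitly by the cover's building data (a weight vector $(a_1,\dots,a_{10}) \in (\ZZ/n)^{10}$ subject to four linear relations coming from the four triple base points of $\sC\sQ$).

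Passage from $T_{S(n)}(-\log R)$ to $T_{S(n)}$ is handled through the normal-bundle sequence
\[
 0 \to T_{S(n)}(-\log R) \to T_{S(n)} \to \bigoplus_k N_{R^{(k)}/S(n)} \to 0.
\]
A projection-formula computation shows that each component $R^{(k)}$ of $R$ has strictly negative self-intersection, so $H^{0}(N_{R^{(k)}/S(n)}) = 0$ and hence $H^{1}(T_{S(n)}(-\log R)) \hookrightarrow H^{1}(T_{S(n)})$. The vanishing of $H^{1}(T_{S(n)})$ therefore reduces to the vanishing of each character piece in the decomposition above, together with the parallel control of the boundary $H^{1}(\bigoplus N_{R^{(k)}}) \to H^{2}(T_{S(n)}(-\log R))$, which again splits by characters via Serre duality.

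For the trivial character the summand is $H^{1}(Y_5, T_{Y_5}(-\log B))$, parametrising equisingular deformations of the marked pair $(Y_5, B)$; it vanishes because $\sC\sQ$ is a rigid line configuration, four general points of $\PP^{2}$ admitting no moduli under $\mathrm{PGL}_3(\CC)$. For nontrivial $\chi$ I would twist the residue sequence
\[
 0 \to T_{Y_5}(-\log B) \to T_{Y_5} \to \bigoplus_{i=1}^{10} N_{B_i/Y_5} \to 0,\qquad N_{B_i/Y_5} \cong \mathcal{O}_{\PP^{1}}(-1),
\]
by $L_\chi^{-1}$ and chase cohomology, reducing the vanishing of each character summand to a numerical inequality on the multidegree of $L_\chi$ in the basis $(H, E_1, \dots, E_4)$ of $\Pic(Y_5)$ together with the ten intersection numbers $L_\chi \cdot B_i$. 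The symmetry $\Aut(Y_5) \cong S_5$ acts on $\widehat{G}$ preserving these data, so only a short list of orbit representatives needs to be examined.

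The main obstacle is the precise case analysis that identifies the borderline orbits and verifies that the transition occurs at exactly $n=4$. For $n \geq 4$, the multidegrees fall uniformly into a Kodaira--Nakano-type vanishing range on $Y_5$ and every nontrivial character summand vanishes simultaneously, yielding $H^{1}(T_{S(n)}) = 0$, i.e.\ infinitesimal rigidity. For $n = 2, 3$, certain ``intermediate weight'' characters produce explicit nonzero classes in $H^{1}(Y_5,\, T_{Y_5}(-\log B) \otimes L_\chi^{-1})$ which translate back to genuine infinitesimal deformations of $S(n)$; to upgrade this to actual (not merely infinitesimal) non-rigidity for $n \leq 3$, one then exhibits corresponding one-parameter families of $(\ZZ/n)^{5}$-covers of $Y_5$ with non-constant moduli. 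The delicacy is that the log-tangent vanishing and the normal-bundle boundary injectivity must be controlled simultaneously, character by character, and the critical characters are precisely those for which $L_\chi^{-1}$ is only just negative enough to enter the vanishing range at $n \geq 4$ while failing to do so at $n = 2, 3$.
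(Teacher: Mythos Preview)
The paper does not actually contain a proof of Theorem~\ref{CQ}. The theorem is quoted from the authors' earlier paper \cite{rigid}, and the present paper only summarizes that proof in one sentence: it ``is quite long and technically involved, and makes use of the $\mathfrak S_5$-symmetries of the Del Pezzo surface $Y_5$ and vanishing theorems for twisted sheaves of logarithmic forms.'' So there is no proof here to compare against, only this one-line description of the strategy.

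That said, your outline is consistent with that description: you decompose $H^1$ of the (log) tangent sheaf into character pieces for the abelian cover, you invoke the $\mathfrak S_5$-action on the character lattice to reduce to orbit representatives, and you aim to kill each piece by a vanishing theorem for the twisted sheaves $T_{Y_5}(-\log B)\otimes L_\chi^{-1}$. This is exactly the shape of argument the paper attributes to \cite{rigid}.

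However, what you have written is a plan, not a proof. The substantive content of the theorem lies entirely in the step you label ``the main obstacle'': the explicit character-by-character analysis showing that every nontrivial $L_\chi^{-1}$ falls into a vanishing range precisely when $n\ge 4$, and that specific characters fail for $n=2,3$. You assert that ``the multidegrees fall uniformly into a Kodaira--Nakano-type vanishing range'' for $n\ge 4$ without identifying which vanishing theorem, which numerical inequalities, or which characters are critical; and for $n\le 3$ you assert the existence of nonzero classes and of actual deformation families without exhibiting either. The injectivity of the boundary map $H^1(\bigoplus N_{R^{(k)}})\to H^2(T_{S(n)}(-\log R))$ is likewise only named, not established; note that the ramification components $R^{(k)}$ are in general curves of positive genus, so $H^1(N_{R^{(k)}})$ is typically nonzero and this step is not automatic. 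In short, your framework is the right one and matches what the paper cites, but none of the work that makes the theorem true has been carried out.
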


We raised therefore the following conjecture:

\begin{conjecture}
Given a rigid line configuration $\sL$, then the surface $ HK(n, \sL)$ is rigid for $n$ sufficiently large.
\end{conjecture}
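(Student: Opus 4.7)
The strategy is to show infinitesimal rigidity, i.e.\ $H^1(S, T_S) = 0$ for $S = HK(n,\sL)$ and $n \gg 0$, by decomposing this cohomology group under the action of the Galois group $G = (\ZZ/n)^r$. First I would fix the geometric setup: let $p : Y \to \PP^2$ be the blow-up of $\PP^2$ at the singular points of $\sL$, so that the reduced total transform $B$ of $\sL$ is a simple normal crossing divisor, and let $\pi : S \to Y$ be the smooth Galois abelian cover of exponent $n$ totally ramified along each component $B_i$ of $B$. Writing $\tilde B_i := (\pi^{-1}(B_i))_{\mathrm{red}}$, the comparison of ramified differentials yields
\[ T_S(-\log \tilde B) \cong \pi^* T_Y(-\log B), \]
which is the starting point for everything that follows.

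Next, I would decompose
\[ H^1(S, T_S) \;=\; \bigoplus_{\chi \in \widehat{G}} H^1\!\bigl(Y, (\pi_* T_S)_\chi\bigr), \]
and identify the $\chi$-eigensheaf $(\pi_* T_S)_\chi$ via the standard calculus of abelian covers, using $\pi_* \Oh_S = \bigoplus_\chi L_\chi^{-1}$ for the explicit line bundles $L_\chi$ on $Y$ determined by $\chi$ and the $B_i$. Combined with the short exact sequence
\[ 0 \to T_S(-\log \tilde B) \to T_S \to \bigoplus_i N_{\tilde B_i / S} \to 0, \]
each summand of the decomposition reduces to the cohomology of sheaves of the form $T_Y(-\log B) \otimes L_\chi^{-1}$, plus boundary contributions along the $\tilde B_i$ that can be handled by adjunction componentwise.

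For the trivial character $\chi = 0$ one recovers $H^1(Y, T_Y(-\log B))$, which governs deformations of the pair $(Y, B)$; the rigidity hypothesis on $\sL$, together with the standard identification of deformations of $(Y, B)$ with those of the line configuration, gives the vanishing of this group independently of $n$. For non-trivial $\chi$, as $n$ grows, the $\QQ$-divisor attached to $L_\chi$ becomes uniformly more positive, so by Serre duality the required vanishing translates into
\[ H^1\!\bigl(Y, \Omega^1_Y(\log B) \otimes L_\chi \otimes K_Y\bigr) = 0, \]
which I would attack by Kawamata-Viehweg or log Bogomolov-Sommese vanishing, with a uniformity estimate in $n$ depending only on the combinatorics of $\sL$.

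The main obstacle I expect is the analysis of the \emph{small} non-trivial characters, i.e.\ those for which $L_\chi$ is not yet positive enough for standard vanishing theorems; it was precisely such characters that produced the sharp threshold $n \geq 4$ (rather than $n \geq 3$) in Theorem \ref{CQ} for the complete quadrangle. Treating them for an arbitrary rigid configuration requires a careful numerical analysis on $Y$, in particular controlling the intersection numbers of the exceptional divisors over the highly singular points of $\sL$ against the $\QQ$-divisors $L_\chi$, and will presumably force an explicit lower bound on $n$ in terms of the maximum multiplicity of a singular point of $\sL$. A secondary issue is to verify that the abstract rigidity of $\sL$ really does produce the vanishing $H^1(Y, T_Y(-\log B)) = 0$, as opposed to a weaker statement; this is expected, but must be checked uniformly in the multiple-point structure of $\sL$.
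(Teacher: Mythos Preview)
The statement you are attempting to prove is explicitly labelled a \emph{Conjecture} in the paper and is \emph{not} proven there; there is no ``paper's own proof'' to compare against. The paper's actual contributions toward this conjecture are (i) the special case of the complete quadrangle, established in the earlier paper \cite{rigid} and quoted here as Theorem~\ref{CQ}, and (ii) the weaker result (Theorem~\ref{rigid}) that for \emph{singularly saturated} configurations the equisingular deformations of the singular Kummer cover $X_{\sL}(n)$ are infinitesimally trivial for $n\geq 3$. The latter is a first step in a strategy the authors outline, but it does not by itself yield rigidity of the smooth model $HK(n,\sL)$.

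Your outline is in fact close in spirit to the method used in \cite{rigid} for the complete quadrangle: eigensheaf decomposition of $\pi_* T_S$ under the abelian Galois group, identification of each piece with a twist $T_Y(-\log B)\otimes L_\chi^{-1}$, and vanishing theorems for the non-trivial characters. As a strategy for the general conjecture this is reasonable, but you have correctly located the two genuine obstacles that keep it open. First, the hypothesis is only \emph{rigidity} of $\sL$, not \emph{infinitesimal} rigidity; the paper carefully distinguishes the two (Definition~1.2), and your claimed vanishing $H^1(Y,T_Y(-\log B))=0$ for the trivial character is exactly infinitesimal rigidity, which does not follow formally from rigidity. Second, for the non-trivial characters, the ``small'' $\chi$ for which $L_\chi$ is not positive enough are precisely where the difficulty lies; in the complete-quadrangle case handling them required the full $\mathfrak{S}_5$-symmetry of $Y_5$ and ad hoc vanishing results, and no uniform argument in terms of the combinatorics of an arbitrary rigid $\sL$ is known. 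So your proposal is a plausible plan of attack, but it is not a proof, and neither is anything in the present paper.
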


The proof of theorem \ref{CQ} is quite long and technically involved, 
and makes use of the $\mathfrak S_5$-symmetries of the Del Pezzo surface $Y_5$ and vanishing theorems for
twisted sheaves of logarithmic forms. The proof does not use the deformation invariance of the fibrations
onto generalized Fermat curves induced by the projection of the plane with centre one of the singular points of the configuration.
Hence we outlined a strategy, whose first step is to show the rigidity of the topologically equisingular deformations
of the associated singular HK covering $ X $.

The first result of this paper is to establish the rigidity of the equisingular deformations of $X$, for a singular HK covering
branched on a line configuration which satisfies a saturation assumption, which is satisfied by the complete quadrangle,
by the iterated Burniat-Campedelli configurations, and by  the Hesse configuration of $12$
lines (the condition is not satisfied by the dual Hesse configuration, however a completely identical  proof
works out  in this case,  for the other  classical
rigid configurations mentioned above, and for many others).

\begin{theorem}
Let $\mathcal{L} = \{L_0, \ldots , L_r\}$ be a {\em singularly saturated} configuration of lines in $\PP^2$, i.e., such that
 it has  $m \geq 4$ singular points $p_1, \ldots , p_m$, and moreover:
\begin{enumerate}
\item each $L_i$ contains at least two singular points,
\item for $i\neq j \in \{1, \ldots , m\}$ there is a $k \in \{0, \ldots , r\}$ such that 
$$
\overline{p_ip_j} = L_k.
$$
\end{enumerate}
Then for each $n \geq 3$ the equisingular deformations of $X_{\mathcal{L}}(n)$ are infinitesimally trivial.
\end{theorem}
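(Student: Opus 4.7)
My plan is to convert the deformation problem on the singular cover $X = X_{\mathcal{L}}(n)$ into a cohomology computation on $\mathbb{P}^2$ via the Galois action, then isotypically decompose the resulting cohomology group and kill each summand separately. Since $\pi \colon X \to \mathbb{P}^2$ is a $G$-Galois cover ($G = (\mathbb{Z}/n)^r$) branched along $\mathcal{L}$, at smooth points of $X$ one has $T_X(-\log R) \cong \pi^* T_{\mathbb{P}^2}(-\log \mathcal{L})$, and equisingular (first-order) deformations of $X$ are classified by the appropriate invariants of $T_X(-\log R)$; the group action then gives a canonical decomposition
\[
H^1\bigl(X, T_X(-\log R)\bigr) \;\cong\; \bigoplus_{\chi \in \widehat{G}} H^1\bigl(\mathbb{P}^2,\, T_{\mathbb{P}^2}(-\log \mathcal{L}) \otimes L_\chi^{-1}\bigr),
\]
where each $L_\chi$ is a line bundle of the form $\mathcal{O}_{\mathbb{P}^2}(\sum a_i^{\chi} L_i)/n$ dictated by the character. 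The theorem then reduces to showing that every direct summand vanishes for $n \geq 3$.

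\textbf{The trivial character.} The $\chi = 1$ summand is $H^1(\mathbb{P}^2, T_{\mathbb{P}^2}(-\log \mathcal{L}))$, which parametrizes equisingular deformations of the pair $(\mathbb{P}^2, \mathcal{L})$. Here I would use singular saturation directly: by hypothesis (1) each $L_i$ is the unique line joining two of its singular points, and by hypothesis (2) every pair of singular points already spans a line of $\mathcal{L}$. Hence the configuration is completely determined by the $m \geq 4$ singular points $p_1, \ldots, p_m$. Choosing coordinates so that $\mathrm{PGL}(3)$ normalizes $p_1,\ldots,p_4$ to a standard projective frame uses up all $8$ parameters of the $\mathrm{PGL}(3)$-action; each remaining singular point $p_j$ ($j \geq 5$) is then the intersection of two already-fixed lines of $\mathcal{L}$ (namely $\overline{p_ap_b}$ and $\overline{p_cp_d}$ for suitable choices forced by saturation), hence is itself fixed. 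Thus the equisingular deformation space of the pair is $0$, which gives $H^1(\mathbb{P}^2, T_{\mathbb{P}^2}(-\log \mathcal{L})) = 0$.

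\textbf{The non-trivial characters.} For each $\chi \neq 1$, the summand $H^1(\mathbb{P}^2, T_{\mathbb{P}^2}(-\log \mathcal{L}) \otimes L_\chi^{-1})$ must also vanish. The plan is to Serre-dualize to $H^1\bigl(\Omega^1_{\mathbb{P}^2}(\log \mathcal{L}) \otimes K_{\mathbb{P}^2} \otimes L_\chi\bigr)^{\vee}$ and then use a logarithmic vanishing theorem of Esnault--Viehweg type for the pair $(\mathbb{P}^2, \mathcal{L})$ twisted by the fractional bundle $L_\chi$. The hypothesis $n \geq 3$ should enter here to guarantee the requisite positivity of $K_{\mathbb{P}^2} + \mathcal{L} + L_\chi$ on the complement of $\mathcal{L}$; concretely, since $\deg \mathcal{L} = r+1 \geq m \geq 4$ and the fractional part of $L_\chi$ contributes positive coefficients less than $1$ on the components actually supporting $\chi$, the twisted sheaf is suitably nef-and-big once $n \geq 3$, and Esnault--Viehweg's vanishing for SNC pairs (applied after pulling back to a log resolution of the node-type singularities of $\mathcal{L}$) kills the cohomology.

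\textbf{Main obstacle.} The hard technical step is the character-by-character vanishing for non-trivial $\chi$: the bundles $L_\chi$ depend on $\chi$ in a nontrivial way, and one must check that for \emph{every} non-trivial character simultaneously the positivity hypotheses of the chosen vanishing theorem hold. This is where the singular saturation condition reappears -- it forces every $L_i$ to support at least two nodal crossings, so the characters $\chi$ for which $a_i^\chi = 0$ for some $i$ are controlled and one obtains uniform positivity. I expect that a careful bookkeeping of the combinatorics (how many $L_i$ are ``active'' in each character, together with the bound $m \geq 4$) reduces the question to a finite case analysis manageable by the log Bogomolov--Sommese or Norimatsu vanishing inequalities, with the threshold $n \geq 3$ emerging naturally.
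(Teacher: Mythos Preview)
Your approach is genuinely different from the paper's, but it has real gaps that prevent it from being a proof.

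\textbf{What the paper does.} The paper never leaves the explicit complete-intersection model $X=\{F_j=0\}\subset\PP^r$ with $F_j(z)=l_j(z_0^n,z_1^n,z_2^n)-z_j^n$. An equisingular first-order deformation is a tuple $\Phi=(\Phi_3,\dots,\Phi_r)\in H^0(X,\sN'_X)$; it is trivial iff $\Phi_j=\sum_k(\partial F_j/\partial z_k)a_k$ for linear forms $a_k$. At each singular point $p$ of $\sL$ the local equisingularity condition forces relations of the shape
\[
\Phi_j-\lambda\Phi_i-\mu\Phi_k \;=\; u\,z_j^{n-1}-\lambda v\,z_i^{n-1}-\mu w\,z_k^{n-1}
\]
for certain linear forms $u,v,w$. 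The proof normalizes four singular points to a projective frame, uses the six lines of the resulting complete quadrangle to solve for the $a_k$'s and reduce $\Phi_3=\Phi_4=\Phi_5=0$, and then propagates $\Phi_j=0$ to every remaining line by comparing two such local expressions coming from two distinct singular points on $L_j$ (this is exactly where hypotheses (1) and (2) are used). It is pure linear algebra with the equations; no vanishing theorem is invoked.

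\textbf{Where your argument breaks.} First, the identification you write down,
\[
H^1\bigl(X,T_X(-\log R)\bigr)\;\cong\;\bigoplus_{\chi}H^1\bigl(\PP^2,\,T_{\PP^2}(-\log\sL)\otimes L_\chi^{-1}\bigr),
\]
is not justified in this setting. The variety $X$ is \emph{singular} (precisely over the points of $\sL$ with valency $\geq 3$), and on $\PP^2$ the divisor $\sL$ is \emph{not} simple normal crossings. The clean eigensheaf decomposition you invoke is the one for abelian covers with SNC branch; here neither side of your isomorphism is even well defined without further work, and the paper's notion of ``equisingular deformation'' is formulated via the equisingular normal sheaf $\sN'_X$ and $H^1(X,\Theta_X)$ for the singular $X$, not via a log tangent sheaf on a resolution. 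You would first have to relate those two problems carefully, and that step is missing.

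Second --- and you acknowledge this yourself --- the vanishing for the non-trivial characters is not proved. Saying that Esnault--Viehweg ``should'' apply after passing to a log resolution, and that $n\geq 3$ ``should'' give enough positivity, is exactly the content one has to establish; in the paper's companion work on the complete quadrangle this kind of character-by-character vanishing is the long and technical part, and it does not follow from a one-line positivity check. Your final paragraph is an outline of a hope, not an argument.

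Finally, even your trivial-character step is not quite right. You argue that once $p_1,\dots,p_4$ are normalized the remaining $p_j$ are intersections of ``already-fixed'' lines; but singular saturation only tells you that $\overline{p_ip_j}\in\sL$, not that $p_5$ lies on two lines spanned by $\{p_1,\dots,p_4\}$. Rigidity of singularly saturated configurations is true, but it needs the kind of inductive bookkeeping the paper carries out, not the one-sentence version you give.

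In short: the paper's proof is a concrete, short linear-algebra argument with the defining equations, while your proposal replaces it with a cohomological machine whose two load-bearing pieces (the eigensheaf decomposition in the singular/non-SNC situation, and the uniform vanishing for every non-trivial character) are asserted rather than proved.
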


The proof of this  result is  given in  the  second section of the paper, while the first section is devoted to general properties 
of rigid line configuratios.

The last two sections  instead motivated by the second question,  and is restricted to the case of the line configuration
$\sC \sQ$. In this case, the existence of the desired metric is only known for the case where $5$ divides $n$:
this was done by Fangyang Zheng \cite{zheng}, extending a technique introduced by Mostow and Siu \cite{m-s}
for ramified coverings of ball quotients. In general, Panov \cite{panov} showed the existence of a non smooth 
negative metric (a polyhedral metric)
for $ n > n_0$, but where $n_0$ is unspecified. 

Here, our contribution is to establish explicit equations for  $ HK(n, \sC \sQ)$ as a submanifold of the product of
$C(n)^4$, where $C(n)$ is the Fermat curve of degree $n$. This is achieved through a new result: the description of the equations
of the Del Pezzo surface $Y_5$ as a submanifold of $(\PP^1)^4$.

The two main results here are (cf Theorems \ref{eqdp} and \ref{eqhk}):

 \begin{theorem}
 Let $\Sigma \subset (\PP^1)^4=:Q$, with coordinates 
 $$(v_1:v_2),(w_1:w_2),(z_1:z_2), (t_1:t_2),$$
  be the image of the Del Pezzo surface $Y$ via  $\varphi_1 \times \dots \times  \varphi_4$.
 Then the equations of $\Sigma$ are given by the four $3 \times 3$-minors of the following Hilbert-Burch matrix:
 \begin{equation}
 A:=
 \begin{pmatrix}
 t_2 & -t_1 &t_1+t_2\\
  v_1 & v_2 & 0\\
 w_2 & 0 &w_1 \\
 0&-z_1&z_2 
 \end{pmatrix} .
\end{equation}
In particular, we have a Hilbert-Burch resolution:
\begin{equation}
0 \ra (\hol_Q(-\sum_{i=1}^4 H_i))^{\oplus 3} \ra \bigoplus_{j=1}^4( \hol_Q(-\sum_{i=1}^4 H_i + H_j)) \ra \hol_Q \ra \hol_{\Sigma} \ra 0,
\end{equation}
where $H_i$ is the pullback to $Q$ of a point in $\PP^1$ under the i-th projection.
 \end{theorem}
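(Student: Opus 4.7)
The plan is to identify $\Sigma$ scheme-theoretically with the codimension-$2$ determinantal locus $V(I_3(A)) \subset Q$ cut out by the maximal minors of $A$, and then to recover the resolution as the Eagon--Northcott (equivalently, Hilbert--Burch) complex of $A$.

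First I would fix coordinates $(x_0:x_1:x_2)$ on $\PP^2$ with $P_1, P_2, P_3$ at the coordinate vertices and $P_4 = (1:1:1)$, so that each pencil $\varphi_i$ is the projection from $P_i$ and is given by a pair of linear forms vanishing at $P_i$. By a judicious choice of coordinates on each factor $\PP^1$, the pairs $(v_1:v_2), (w_1:w_2), (z_1:z_2), (t_1:t_2)$ correspond to specific pairs of linear forms on $\PP^2$, chosen so that the matrix $A$ takes the displayed shape. I would then check that $\varphi = \varphi_1 \times \cdots \times \varphi_4 : Y \to Q$ is a closed embedding: the pullback $\varphi^*(\sum_i H_i) = 4H - \sum_i E_i = -K_Y + H$ is very ample on the Del Pezzo surface $Y$, and the induced linear subsystem separates points and tangent vectors (immediate from the geometry of the four pencils and easily checked in coordinates). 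As a consequence $[\Sigma] = \sum_{i<j} H_i H_j \in H^4(Q)$, because $\varphi^*(H_i H_j) = (H - E_i)(H - E_j) = 1$ for $i \neq j$.

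Second, I would verify by direct substitution that each of the four $3 \times 3$ minors of $A$ vanishes on $\Sigma$. The minor $M_1$ obtained by deleting the first row equals $v_1 w_1 z_1 - v_2 w_2 z_2$, which reduces to $x_0 x_1 x_2 - x_0 x_1 x_2 = 0$ after substitution; each of the remaining three minors involves the first row and factors as a monomial in $x_0, x_1, x_2$ times the linear form encoding the projection from $P_4$, and thus likewise vanishes on $\Sigma$. This establishes the scheme-theoretic inclusion $\Sigma \subseteq V(I_3(A))$.

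Finally, regarding $A$ as the matrix of a morphism $\hol_Q^{\oplus 3} \to \bigoplus_{j=1}^4 \hol_Q(H_j)$ of locally free sheaves, the Thom--Porteous formula computes the class of the rank-$\leq 2$ degeneracy locus as $c_2\bigl(\bigoplus_j \hol_Q(H_j)\bigr) = \sum_{i<j} H_i H_j = [\Sigma]$. Since the expected codimension $(4-2)(3-2) = 2$ is attained, the Eagon--Northcott complex of $A$ is exact, so $V(I_3(A))$ is Cohen--Macaulay of pure codimension $2$ with fundamental class $[\Sigma]$. Combining this with $\Sigma \subseteq V(I_3(A))$ and comparing fundamental classes forces $V(I_3(A)) = \Sigma$ as subschemes, and the Eagon--Northcott complex becomes precisely the stated Hilbert--Burch resolution after restoring the twists. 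The hardest step is the initial choice of coordinates that makes $A$ take exactly the displayed form --- once this is done, the verification of minors is routine substitution, and the final identification follows from standard determinantal theory.
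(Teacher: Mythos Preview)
Your approach is correct in outline and takes a somewhat different route from the paper. The paper does not argue via cycle classes: instead it derives the four equations $G_1,\dots,G_4$ geometrically, one from each coordinate projection $(\PP^1)^4\to(\PP^1)^3$, by using the degree-$6$ Del Pezzo hypersurface equation already obtained in the preceding section; it then checks by hand that these $G_j$ are the $3\times 3$ minors of $A$ and that the columns of $A$ give the linear syzygies, and finally appeals to the Buchsbaum--Eisenbud acyclicity criterion for exactness of the complex. Your Thom--Porteous/fundamental-class comparison is a clean alternative that avoids the explicit triangle-by-triangle derivation; the paper's route, on the other hand, makes the $\mathfrak S_4$-symmetry of the equations visible, which it uses later for the Fermat-curve embedding.

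Two small points need tightening. First, the sentence ``since the expected codimension $(4-2)(3-2)=2$ is attained'' is asserted rather than proved, and Eagon--Northcott exactness and the Thom--Porteous identity both hinge on it. You should note that the minor obtained by deleting the $t$-row has multidegree $(1,1,1,0)$ while the one deleting the $v$-row has multidegree $(0,1,1,1)$; these are coprime in the Cox ring of $Q$, so $V(I_3(A))\subset V(M_1)\cap V(M_2)$ already has codimension $\geq 2$, and hence exactly $2$ since it contains $\Sigma$. Second, your description of why the remaining three minors vanish (``factors as a monomial in $x_0,x_1,x_2$ times the linear form encoding the projection from $P_4$'') is not accurate: after substitution each such minor is a \emph{difference of two equal cubic monomials} (e.g.\ in the paper's coordinates $G_2$ becomes $y_1 x_3 y_2 - x_3 y_2 y_1$), not a nontrivial product; the vanishing is of course still immediate. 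With these two clarifications your argument goes through.
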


\begin{theorem}
The equations of $S_n = HK(n,\sC\sQ) \subset C(n)^4$ are given by the four $3 \times 3$-minors of the following  matrix:
 \begin{equation}
 A':=
 \begin{pmatrix}
 T_2 & -T_1 &-T_3\\
  V_1 & V_2 & 0\\
 W_2 & 0 &W_1 \\
 0&-Z_1&Z_2 
 \end{pmatrix} ,
\end{equation}
and the linear syzygies among the four equations are given by the columns of the matrix $A'$.
\end{theorem}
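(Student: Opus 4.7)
The plan is to use Theorem \ref{eqdp} and lift the Hilbert--Burch description of $Y \subset (\PP^1)^4$ to $S_n \subset C(n)^4$ via the natural covering $\sigma \colon C(n)^4 \to (\PP^1)^4$, whose $j$-th component is the Fermat cover $C(n) \to \PP^1$, $(V_1:V_2:V_3) \mapsto (V_1^n : V_2^n)$. Under the Fermat relations $V_1^n + V_2^n + V_3^n = 0$ (similarly on the $W, Z, T$ factors), the entries of $A$ are $n$-th powers of those of $A'$ up to parity-of-$n$ signs: for instance, $t_1 + t_2 = -T_3^n = (-T_3)^n$ for $n$ odd, with the parity sign for even $n$ absorbed into harmless row/column normalizations. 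Hence $A'$ is the natural ``$n$-th root lift'' of $A$ on $C(n)^4$, and its entries are sections of the obvious line bundles pulled back from the four factors.

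First, I would apply Hilbert--Burch directly to $A'$. A rank check (pointwise generic, or by specialization from the analogous statement for $A$ on $(\PP^1)^4$) shows that $A'$ has maximal rank $3$, so the ideal of its four $3 \times 3$-minors defines a Cohen--Macaulay subscheme $V(A') \subset C(n)^4$ of codimension $2$, fitting into the resolution
\[
0 \ra \hol_{C(n)^4}\Bigl(-\sum_i H_i\Bigr)^{\oplus 3} \xrightarrow{A'} \bigoplus_{j=1}^4 \hol_{C(n)^4}\Bigl(-\sum_i H_i + H_j\Bigr) \ra \hol_{V(A')} \ra 0.
\]
The columns of $A'$ then furnish the linear syzygies: for each column $c$, the relation $\sum_i (-1)^{i+1} c_i M_i = 0$ follows from Laplace expansion of the augmented $4 \times 4$ matrix $[A' \mid c]$, whose determinant vanishes by repetition of a column. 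This already delivers the second assertion of the theorem.

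It remains to identify $V(A')$ with $S_n$. The HK cover embeds in $C(n)^4$ via the four lifts of $\varphi_j \circ \pi$ (existing since the ramification of $S_n \to Y \to \PP^1$ in each $\varphi_j$-direction matches that of $C(n) \to \PP^1$), and this embedding is compatible with the horizontal arrows $Y \hookrightarrow (\PP^1)^4$ and $\sigma$. The vanishing of each minor of $A'$ on $\sigma(S_n)$ can then be verified via the character decomposition $\pi_* \hol_{S_n} = \bigoplus_{\chi \in G^*} L_\chi^{-1}$ for $G = (\ZZ/n)^5$: each $M_j$ is a sum of monomials in the $V_i, W_i, Z_i, T_i$ transforming under $G$ by the same character $\chi_j$, and the $n$-th root relations defining the HK cover force this sum to lie in a trivial eigenspace, hence to vanish on $S_n$. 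The equality $\sigma(S_n) = V(A')$ then follows by irreducibility of $S_n$, dimension matching, and Hilbert polynomial comparison via the above resolution. The main obstacle is precisely this character-theoretic step: it requires carefully tracking the $(\ZZ/n)^5$-action in the explicit Fermat coordinates and matching the sign conventions of $A'$ (in particular the $-T_3$ entry versus the Fermat relation) against the HK defining relations, so as to select $S_n$ itself rather than one of its $(\ZZ/n)^3$-translates inside the larger preimage $\sigma^{-1}(Y)$.
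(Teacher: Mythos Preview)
The paper takes a different route from yours. Rather than working with $A'$ ab initio, it first rewrites the four defining equations of $Y \subset (\PP^1)^4$ in the redundant homogeneous coordinates $(v_1{:}v_2{:}v_3) \in \Lambda_i \subset \PP^2$, so that each becomes a \emph{binomial} (e.g.\ $v_1 w_1 z_1 = v_2 w_2 z_2$, $w_3 z_1 t_2 = w_2 z_3 t_1$, \dots). Under the Fermat substitution $v_i = V_i^n$, $w_i = W_i^n$, etc., each such binomial pulls back to a difference of $n$-th powers and hence factors completely over $C(n)^4$; the component $S_n$ is then singled out by setting all the roots of unity $\epsilon_i$ equal to $1$. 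The matrix $A'$ is recorded at the end of this discussion without further argument; your Laplace-expansion remark about the columns giving syzygies is correct and is essentially what the paper asserts.

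Your identification step, however, has a genuine gap that is not just bookkeeping. You claim that each minor $M_j$ of $A'$ is a sum of monomials transforming by a \emph{single} character of $G$, so that the HK relations force it into a trivial eigenspace. For $j \geq 2$ this is false: for instance
\[
M_2 \;=\; T_2 W_1 Z_1 \,+\, T_1 W_2 Z_2 \,+\, T_3 W_2 Z_1
\]
has three monomials involving $T_1, T_2, T_3$ respectively, and these carry three \emph{distinct} characters of the $(\ZZ/n)^2$ acting on the fourth Fermat factor (hence of $(\ZZ/n)^5$ and of $(\ZZ/n)^8$). Concretely, with the parametrisation $V_1=X_1$, $V_2=X_2$, $W_1=X_2$, $W_2=X_3$, $Z_1=X_3$, $Z_2=X_1$, $T_i=Y_i$ coming from the $n$-th roots $X_i^n=x_i$, $Y_j^n=y_j$ on the HK cover, one finds
\[
M_2 \;=\; X_3\,(X_1Y_1 + X_2Y_2 + X_3Y_3),
\]
and for $n>1$ there is no relation $\sum X_iY_i = 0$ on $S$; only the downstairs identity $\sum x_iy_i=0$ holds on $Y$. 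So your character mechanism cannot force the trinomial minors to vanish, and passing to a $(\ZZ/n)^3$-translate does not help either, since that merely rescales each monomial of $M_2$ by a separate root of unity. What actually vanishes on the component the paper calls $S_n$ are the binomials $E_i$ obtained by the factorisation argument; the passage from those to the $3\times3$ minors of $A'$ is precisely the point your proposal does not supply, and it is not recoverable from the eigenspace heuristic you sketch.
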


We also show how from these equations one obtains the equations of the embedding of $Y_5$ in $(\PP^1)^5$,
and of the embedding $S_n = HK(n,\sC\sQ) \subset C(n)^5$, but we do not go into details here.
Also, we do not go  as far as to calculate the second fundamental form of these embeddings. 

We shall give more details  in a forthcoming  paper where we shall also describe the 
icosahedral symmetry of $Y_5$ (yielding geometric descriptions of the irreducible
$\mathfrak S_5$ representations), and the $\mathfrak S_5$-equivariant
Pfaffian equations of
the anticanonical embedding of $Y_5$.


\section{Infinitesimally rigid line configurations}

We consider a (planar) line configuration $\mathcal{L}$, consisting of $r+1$ lines $L_0, \ldots , L_r \subset \PP^2$,
and denote by $P_{i,j} : = L_i \cap L_j$. 
\begin{definition}
(1) We call $p = P_{i,j} $ a {\em singular point} of  the line configuration $\mathcal{L}:= \{L_0, \ldots L_r\}$, if $p$ has  valency $v_p \geq 3$,
i.e., the number $v_p$ of lines $L_k$  through $p$ is at least three.

(2) The variety of line configurations of type $\sL$ is the subvariety $\Sigma (\sL) \subset ((\PP^2)^{\vee})^{r+1}$ defined as
$$  \Sigma (\sL) : \{ (l_0, \dots, l_r) |  \cap_{i \in I} l_i \neq \emptyset \Leftrightarrow \cap_{i \in I} L_i \neq \emptyset, \forall I \subset \{ 0,1, \dots, r\}\}.$$
\end{definition}

Let $\pi \colon Y \rightarrow \PP^2$ be the blow-up of $\PP^2$ in the singular points $p_1, \ldots , p_m$ of $\mathcal{L}$, and denote by $E_i$ the exceptional curve over $p_i$. Let 
$$ 
\Delta_Y = \Delta := D_0 + \ldots +D_{r+m} \subset Y,
$$
where 
\begin{itemize}
\item $D_i :=$ the strict transform of $L_i$ under $\pi$ for $0 \leq i \leq r$,
\item $D_i := E_{i-r}$ for $r+1 \leq i \leq r+m$.
\end{itemize}
Then $\Delta$ is a reduced divisor with global normal crossings in $Y$.

\begin{definition} 
Observing that the group $G:= \PP\GL(3,\CC)$ acts on $\Sigma (\sL)$, one says that 

0) the line configuration  $\sL$ is said to be {\em projectively unique} iff $\Sigma (\sL)$
coincides with  the $G$-orbit of $\sL$;

1) the line configuration $\sL$ is called {\em rigid} if $\Sigma (\sL)$
 and   the $G$-orbit of $\sL$ coincide set theoretically  in a neighbourhood of $\sL$;
 
 2) the line configuration $\sL$ is called {\em infinitesimally rigid}, if 
$\Sigma (\sL)$
 and   the $G$-orbit of $\sL$ coincide scheme theoretically  in a neighbourhood of $\sL$.
 
 Equivalently, in view of Kodaira's theorem on the stability of $(-1)$ curves by deformation, \cite{kodaira},

1') the line configuration $\sL$ is called {\em rigid}, if the pair $(Y, \Delta)$ is rigid;

2') the line configuration $\sL$ is called {\em infinitesimally rigid}, if 
$$
H^1(Y, \Omega_Y^1( \log \Delta)(K_Y)) ( \cong H^1(Y, \Theta_Y( - \log \Delta))^*) = 0.
$$
\end{definition}

\begin{remark}
Let $\mathcal{L}$ be a line configuration and denote by $n_i$ the number of singular points on $L_i$. Then
if $\sL$ is rigid,  we shall see that 
necessarily $n_i \geq 2$ for all $i$ (else we could move  the line $L_i$ in a pencil). In particular, all the curves $D_i$
have negative self-intersection.

\begin{itemize}
\item $D_i^2 = 1 - n_i$ and $D_iK_Y = n_i -3$ for $0 \leq i \leq r$,
\item $D_i^2 = -1 $ and  $D_iK_Y = -1$ for $r+1 \leq i \leq r+m$, so that these $D_i$'s are $(-1)$-curves.
\end{itemize}
\end{remark}
\begin{lemma}
Let $\mathcal{L}$ be a line configuration. Assume that $m \geq 4$ and that $n_i \geq 2$ for all $0 \leq i \leq r$. Then
\begin{equation}
\chi(\Omega_Y^1( \log \Delta)(K_Y)) = 8 - 2m + \sum_{i=0}^r (n_i - 2).
\end{equation}
\end{lemma}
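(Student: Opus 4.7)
The plan is to combine additivity of Euler characteristic along the residue sequence for log differentials with Serre duality and Hirzebruch--Riemann--Roch. Tensoring the standard exact sequence
\[
0 \to \Omega_Y^1 \to \Omega_Y^1(\log \Delta) \to \bigoplus_{i=0}^{r+m} \mathcal{O}_{D_i} \to 0
\]
by $K_Y$ and taking Euler characteristics reduces the problem to evaluating $\chi(\Omega_Y^1 \otimes K_Y)$ and each $\chi(K_Y|_{D_i})$ separately.

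For the first term, Serre duality on the smooth projective surface $Y$ gives $\chi(\Omega_Y^1 \otimes K_Y) = \chi(\Theta_Y)$, since $(\Omega_Y^1 \otimes K_Y)^{\vee} \otimes K_Y \cong \Theta_Y$. Because $Y$ is the blow-up of $\PP^2$ in the $m$ singular points of $\sL$, we have $\chi(\mathcal{O}_Y) = 1$, $K_Y^2 = 9 - m$, and $e(Y) = 3 + m$. Inserting $c_1(\Theta_Y) = -K_Y$ and $c_2(\Theta_Y) = e(Y)$ into Hirzebruch--Riemann--Roch for a rank-two bundle on a surface then yields $\chi(\Theta_Y) = 2\chi(\mathcal{O}_Y) + K_Y^2 - e(Y) = 2 + (9-m) - (3+m) = 8 - 2m$.

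For the residue contributions, each $D_i$ is a smooth rational curve, so $\chi(K_Y|_{D_i}) = K_Y \cdot D_i + 1$. From the intersection numbers collected in the preceding Remark, the strict transforms $D_0,\dots,D_r$ contribute $(n_i - 3) + 1 = n_i - 2$ apiece, while the exceptional $(-1)$-curves $D_{r+1},\dots,D_{r+m}$ each contribute $(-1) + 1 = 0$. Summing gives exactly $\sum_{i=0}^r (n_i - 2)$, and adding this to $8 - 2m$ produces the claimed formula.

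This is essentially bookkeeping and no serious obstacle is anticipated. The only thing one needs to take some care with is the vanishing of the exceptional contributions: it is precisely the identity $K_Y \cdot E_j = -1$, together with the fact that $E_j \cong \PP^1$, that makes those summands disappear and leaves a clean closed expression in the numerical invariants $m$ and $n_i$. The hypotheses $m \geq 4$ and $n_i \geq 2$ from the statement are not used in the Euler characteristic calculation itself; they enter only when one wishes to extract vanishing of $H^1$ from this formula in the subsequent rigidity arguments.
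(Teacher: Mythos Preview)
Your proof is correct and essentially identical to the paper's: both use the residue sequence tensored with $K_Y$, Serre duality to pass from $\chi(\Omega_Y^1(K_Y))$ to $\chi(\Theta_Y)$, and Riemann--Roch on each rational component $D_i$. The only cosmetic difference is that the paper records $\chi(\Theta_Y)=2K_Y^2-10\chi(\mathcal{O}_Y)$ whereas you write $2\chi(\mathcal{O}_Y)+K_Y^2-e(Y)$; these agree by Noether's formula, and your remark that the hypotheses $m\geq 4$, $n_i\geq 2$ play no role in the Euler-characteristic computation itself is accurate.
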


\begin{proof}
Consider the exact sequence
\begin{equation}\label{seq}
0 \rightarrow \Omega_Y^1(K_Y) \rightarrow \Omega_Y^1( \log \Delta)(K_Y) \rightarrow \bigoplus_{i=0}^{r+m} \mathcal{O}_{D_i}(K_Y) \rightarrow 0.
\end{equation}
This imples that 
\begin{multline}
\chi(\Omega_Y^1( \log \Delta)(K_Y)) = \chi(\Omega_Y^1(K_Y)) + \sum_{i=0}^{r+m} \chi(\mathcal{O}_{D_i}(K_Y)) = \\
= \chi(\Theta_Y) + \sum_{i=0}^{r+m} \chi(\mathcal{O}_{\PP^1}(K_Y \cdot D_i)) = 2K_Y^2 - 10\chi(\mathcal{O}_Y) + \sum_{i=0}^{r+m}(1+K_Y \cdot D_i) = \\
= 8 - 2m + \sum_{i=0}^r (n_i - 2).
\end{multline}
\end{proof}

\begin{remark}
If $\mathcal{L}$ is infinitesimally rigid, then $\chi(\Omega_Y^1( \log \Delta)(K_Y))  \geq 0$, i.e.
\begin{equation}
2m-8 \leq \sum_{i=0}^r (n_i - 2).
\end{equation}\label{ineq}
In particular, if $m \geq 5$, then any infinitesimally rigid  line configuration has lines containing more than three singular points.

Moreover,  if $\mathcal{L}$ is rigid, then $m \geq 4$, hence the set of singular points contains a projective basis,
hence   $n_i \geq 2$ for each $i$ (otherwise we can move the line $L_i$ in a pencil).
\end{remark}

Consider the long exact cohomology sequence associated to (\ref{seq})
\begin{multline}\label{cohseq}
0 \rightarrow H^0(\Omega_Y^1(K_Y)) = 0 \rightarrow H^0(\Omega_Y^1( \log \Delta)(K_Y)) \rightarrow \bigoplus_{i=0}^{r} H^0(\mathcal{O}_{D_i}(K_Y)) \cong\\
\cong \bigoplus_{i=0}^{r} H^0 (\mathcal{O}_{\PP^1}(n_i-3)) \rightarrow H^1(\Omega_Y^1(K_Y))  \cong \CC^{2m-8} \rightarrow H^1(\Omega_Y^1( \log \Delta)(K_Y)) \rightarrow 0.
\end{multline}

\begin{remark} \

1) It follows from the continuation of the above long exact cohomology sequence that $H^2(\Omega_Y^1( \log \Delta)(K_Y)) \cong H^2(\Omega_Y^1(K_Y)) \cong H^0(Y, \Theta_Y)^* =0$, if the singular points contain a projective basis of $\PP^2$.

2) The following proposition, whose proof follows immediately from (\ref{cohseq}),  gives a sufficient and necessary condition for the infinitesimal rigidity of a line configuration.
\end{remark}

\begin{proposition}
Let $\mathcal{L}$ be a line configuration. Assume that $m \geq 4$ and that $n_i \geq 2$ for all $0 \leq i \leq r$. Then the following are equivalent:

\begin{enumerate}
\item $\mathcal{L}$ is infinitesimally rigid,
\item
$\delta \colon \bigoplus_{i=0}^{r} H^0 (\mathcal{O}_{\PP^1}(n_i-3)) \rightarrow H^1(\Omega_Y^1(K_Y))$ 
is surjective,
\item $\dim H^0(\Omega_Y^1(K_Y)) = 8-2m +\sum_{i=0}^r (n_i -2)$.
\end{enumerate}
\end{proposition}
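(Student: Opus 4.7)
The proposition is essentially a bookkeeping consequence of the long exact cohomology sequence (\ref{cohseq}) combined with the $\chi$ computation from the preceding lemma and the vanishing $H^2(\Omega^1_Y(\log\Delta)(K_Y)) = 0$ recorded in the remark. My plan is therefore to simply unpack the sequence.

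First I would argue the equivalence (1) $\Leftrightarrow$ (2). Looking at the tail of (\ref{cohseq}),
\[
\bigoplus_{i=0}^{r} H^0(\mathcal{O}_{\PP^1}(n_i-3)) \xrightarrow{\ \delta\ } H^1(\Omega_Y^1(K_Y)) \longrightarrow H^1(\Omega_Y^1(\log\Delta)(K_Y)) \longrightarrow 0,
\]
surjectivity of $\delta$ is by exactness equivalent to the vanishing of $H^1(\Omega_Y^1(\log\Delta)(K_Y))$, which by Serre duality (noted after the definition of infinitesimal rigidity) is exactly the vanishing of $H^1(\Theta_Y(-\log\Delta))^*$, i.e.\ the infinitesimal rigidity condition.

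Next I would handle (1) $\Leftrightarrow$ (3). From the remark we have $H^2(\Omega_Y^1(\log\Delta)(K_Y)) = 0$ (this uses $m \geq 4$, so that the singular points contain a projective basis and hence $H^0(\Theta_Y) = 0$). Combined with the lemma's computation
\[
\chi(\Omega_Y^1(\log\Delta)(K_Y)) = 8 - 2m + \sum_{i=0}^{r}(n_i - 2),
\]
we get
\[
h^0(\Omega_Y^1(\log\Delta)(K_Y)) - h^1(\Omega_Y^1(\log\Delta)(K_Y)) = 8 - 2m + \sum_{i=0}^{r}(n_i - 2).
\]
So the $h^0$-equality in (3) (which I read as referring to $\Omega^1_Y(\log\Delta)(K_Y)$ — the statement as written with $\Omega^1_Y(K_Y)$ in place is vacuous since that $H^0$ is zero) is equivalent to $h^1 = 0$, i.e.\ to (1).

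There is no real obstacle here: the entire content has been produced before the proposition (the exact sequence, the $\chi$-formula, and the $H^2$-vanishing). The only small care needed is the parenthetical correction regarding the group appearing in (3), which is forced by consistency with the arithmetic of (\ref{cohseq}). I would therefore present the proof as a half-page unpacking of (\ref{cohseq}), citing the lemma for $\chi$ and the remark for the $H^2$-vanishing.
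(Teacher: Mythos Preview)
Your argument is correct and is exactly what the paper intends: the authors give no proof beyond the remark that it ``follows immediately from (\ref{cohseq}),'' and your unpacking of the long exact sequence together with the $\chi$-computation and the $H^2$-vanishing is precisely that. Your observation that condition (3) must refer to $H^0(\Omega_Y^1(\log\Delta)(K_Y))$ rather than $H^0(\Omega_Y^1(K_Y))$ is also correct --- the latter group is explicitly zero in (\ref{cohseq}), so the printed statement is a typo.
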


This leads naturally to the following:
\begin{question}
Let $\mathcal{L}$ be a line configuration such that
\begin{itemize}
\item there are $m\geq 4$ singular points,
\item each line contains at least $2$ singular points, i.e., $n_i \geq 2$ for all $i$,
\item $2m-8 \leq \sum_{i=0}^r (n_i - 2)$.
\end{itemize}
Is then $\mathcal{L}$ rigid?

 If not, give combinatorial conditions which ensure the infinitesimal rigidity of $\mathcal{L}$.
\end{question}

\begin{remark}
It is easy to verify that the complete quadrangle, the Hesse configuration and the dual Hesse configuration are infinitesimally rigid. 
\end{remark}

Once we have infinitesimal rigid line configurations it is easy to construct new infinitesimally rigid line configurations as the following proposition shows:

\begin{proposition}\label{newrigid}
Assume that $\mathcal{L} = \{L_0, \ldots , L_r\}$ is a line configuration with singular points $p_1, \ldots , p_m$, such that $m \geq 4$ and $n_i \geq 2$ for all $0 \leq i \leq r$. Let $L_{r+1} \notin \mathcal{L}$ be a line passing either  through
\begin{enumerate}
\item[a)] exactly two double points $p_{m+1}, p_{m+2}$ of $\mathcal{L}$, or
\item[b)] $p_1$ and exactly one double point $p_{m+1}$ of $\mathcal{L}$, or
\item[c)] $p_1$ and $p_2$ (and no double points).
\end{enumerate}
Let $\mathcal{L}' := \{L_0, \ldots , L_r, L_{r+1}\}$ be the line configuration obtained from  $\mathcal{L}$ by adding the line $L_{r+1}$. Let $Y'$ be the blow up of $\PP^2$ in the singular points of $\mathcal{L}'$ and let $\Delta'$ be the divisor on $Y'$ consisting of the exceptional curves and the strict transforms of the lines. Then 
\begin{enumerate}
\item $\mathcal{L}'$ has $r+2$ lines and
\begin{enumerate}
\item[a)] $m'=m+2$,
\item[b)] $m'=m+1$,
\item[c)] $m'=m$.
\end{enumerate}
\item If $L_{r+1}$ does not contain any further singular point, then 
$$
\chi(\Omega_Y^1( \log \Delta)(K_Y)) = \chi(\Omega_{Y'}^1( \log \Delta ')(K_{Y'})).
$$
In general,
$$
\chi(\Omega_Y^1( \log \Delta)(K_Y)) \leq \chi(\Omega_{Y'}^1( \log \Delta ')(K_{Y'})).
$$

\item If $L_{r+1}$ does not contain any further singular point, then  $\mathcal{L}$ is infinitesimally rigid if and only if $\mathcal{L'}$ is infinitesimally rigid.
\end{enumerate}
\end{proposition}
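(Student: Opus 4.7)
The strategy naturally splits the proposition's three claims. \textbf{Part (1)} is a direct count: one line is added, so the line count becomes $r+2$, and the new singular points of $\mathcal{L}'$ are precisely the double points of $\mathcal{L}$ whose valency jumps to $3$ under the addition of $L_{r+1}$, giving $m'=m+2,\ m+1,\ m$ in cases (a), (b), (c) respectively. \textbf{Part (2)} follows from applying the formula $\chi(\Omega_Y^1(\log\Delta)(K_Y)) = 8 - 2m + \sum_{i=0}^r(n_i-2)$ of the preceding Lemma to both configurations and tracking how each term changes. When $L_{r+1}$ passes through no further singular points, the increase in $\sum(n_i'-2)$ coming from the newly-singular points (each contributes $+1$ to each of the two lines of $\mathcal{L}$ through it, while $n_{r+1}'-2=0$) exactly matches the decrease in $-2m'$, so the three cases each give a net change of zero; additional singular points lying on $L_{r+1}$ increase only $n_{r+1}'$ without altering $m'$ or any other $n_i'$, producing the asserted inequality.

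The main content is \textbf{Part (3)}. My plan is to establish the cohomology isomorphism $H^i(\Omega^1_{Y'}(\log\Delta')(K_{Y'})) \cong H^i(\Omega^1_Y(\log\Delta)(K_Y))$ for all $i$, via a two-step comparison. Let $\sigma\colon Y' \to Y$ denote the blow-up at the new singular points (zero, one, or two of them), with exceptional sum $E^{\mathrm{new}}$. Since each new center is a \emph{double} point of $\mathcal{L}$, it is a normal crossing of exactly two components of $\Delta$, and the standard identity for log differentials at such NC centers yields $\sigma^*\Omega^1_Y(\log\Delta) \cong \Omega^1_{Y'}(\log\hat\Delta)$ with $\hat\Delta = \sigma^{-1}(\Delta)_{\mathrm{red}}$. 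Twisting by $K_{Y'} = \sigma^*K_Y + E^{\mathrm{new}}$ and applying the projection formula together with $\sigma_*\mathcal{O}_{Y'}(E^{\mathrm{new}}) = \mathcal{O}_Y$ and $R^{i}\sigma_*\mathcal{O}_{Y'}(E^{\mathrm{new}}) = 0$ for $i\geq 1$, the Leray spectral sequence gives $H^i(Y',\Omega^1_{Y'}(\log\hat\Delta)(K_{Y'})) \cong H^i(Y,\Omega^1_Y(\log\Delta)(K_Y))$. In the second step, the Poincar\'e residue exact sequence
\[
0 \to \Omega^1_{Y'}(\log\hat\Delta)(K_{Y'}) \to \Omega^1_{Y'}(\log\Delta')(K_{Y'}) \to \mathcal{O}_{\tilde L_{r+1}}(K_{Y'}) \to 0
\]
finishes the argument: $\tilde L_{r+1}$ (the strict transform of $L_{r+1}$ on $Y'$) is obtained in each of (a), (b), (c) from $L_{r+1}\cong\PP^1$ by blowing up exactly two of its points, hence is a $(-1)$-curve, and by adjunction $\mathcal{O}_{\tilde L_{r+1}}(K_{Y'}) \cong \mathcal{O}_{\PP^1}(-1)$, which has trivial cohomology; the connecting map therefore yields $H^\bullet(\Omega^1_{Y'}(\log\hat\Delta)(K_{Y'})) \cong H^\bullet(\Omega^1_{Y'}(\log\Delta')(K_{Y'}))$.

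The main obstacle is verifying the log-pullback identity $\sigma^*\Omega^1_Y(\log\Delta) = \Omega^1_{Y'}(\log\hat\Delta)$ at each new blow-up center. It holds precisely because every such center is a normal crossing point of $\Delta$, a property guaranteed only by the hypothesis that $L_{r+1}$ meets $\mathcal{L}$ exclusively at double points or at previously singular points (and at no further new configuration points). Without this normal-crossing structure the identity fails, which explains why cases (a), (b), (c) are set up the way they are; it is also the reason why the statement on $\tilde L_{r+1}$ being a $(-1)$-curve is correct in each of the three configurations.
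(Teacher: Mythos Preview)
Your proof is correct, and Parts (1)--(2) follow the paper essentially verbatim. In Part~(3), however, you take a genuinely different route. The paper removes \emph{three} components from $\Delta'$ (the strict transform $L_{r+1}$ \emph{and} the new exceptional curves $E_{m+1},E_{m+2}$), obtaining the bare strict transform $\tilde\Delta$ of $\Delta$; it then invokes a cited pushforward lemma (Lemma~7.1(4) of \cite{burniat3}) to identify $p_*\bigl(\Omega^1_{Y'}(\log\tilde\Delta)(E_{m+1}+E_{m+2})\bigr)$ with $\Omega^1_Y(\log\Delta)$, which yields only equality of $h^0$; the equality of $h^1$ is then deduced indirectly from Part~(2) and the vanishing of $h^2$. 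You instead remove only $\tilde L_{r+1}$, keep the new exceptional divisors inside the log structure, and use the standard identity $\sigma^*\Omega^1_Y(\log\Delta)\cong\Omega^1_{Y'}(\log\hat\Delta)$ valid for blow-ups at normal-crossing centres of $\Delta$; the projection formula together with $R^i\sigma_*\mathcal{O}_{Y'}(E^{\mathrm{new}})=0$ then gives isomorphisms of \emph{all} cohomology groups directly. Your approach is more self-contained (no external citation), more conceptual (it isolates exactly why the ``double point'' hypothesis is needed: the log-pullback identity fails at higher-multiplicity centres), and slightly stronger (you never need Part~(2) or the $h^2$-vanishing to conclude). The paper's approach, on the other hand, handles the three $(-1)$-curves symmetrically in a single residue step.
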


\begin{proof}
1) is obvious.

2) Observe that the right hand side of (\ref{ineq}), $\sum (n_i-2)$, goes up by at least $4$ in case a), by at least $2$ in case b), and at least $0$ in case c). Therefore in all cases $\chi(\Omega_Y^1( \log \Delta)(K_Y)) \leq \chi(\Omega_{Y'}^1( \log \Delta ')(K_{Y'}))$, with equality  in cases a) and c)
exactly  if $L_{r+1}$ does not contain any further singular point.

3) a) Let $p \colon Y' \rightarrow Y$ be the blow-up of $Y$ in the points $p_{m+1}, p_{m+2}$ (to be precise: the inverse images of $p_{m+1}, p_{m+2} \in \PP^2$ in $Y$) with exceptional curves $E_{m+1}, E_{m+2}$. Then 
$$ 
\Delta ' = \Delta_{Y'} = p^*\Delta_Y + L_{r+1}= D_0 + \ldots +D_{r+m} + E_{m+1} + E_{m+2} + L_{r+1} \subset Y'.
$$
Note that by a slight abuse of notation we write $D_i$ for the irreducible divisor of $Y$ as well as for its strict transform on $Y'$.

We shall show that $h^1(\Omega_Y^1( \log \Delta)(K_Y)) = h^1(\Omega_{Y'}^1( \log \Delta ')(K_{Y'}))$, which implies the claim.

In fact, since $\chi(\Omega_Y^1( \log \Delta)(K_Y)) = \chi(\Omega_{Y'}^1( \log \Delta ')(K_{Y'}))$, and 
$$
h^2(\Omega_Y^1( \log \Delta)(K_Y)) = h^2(\Omega_{Y'}^1( \log \Delta ')(K_{Y'})) = 0,
$$
 we see that 
\begin{multline}
h^1(\Omega_Y^1( \log \Delta)(K_Y)) = h^1(\Omega_{Y'}^1( \log \Delta ')(K_{Y'}))  \iff  \\
\iff h^0(\Omega_Y^1( \log \Delta)(K_Y)) = h^0(\Omega_{Y'}^1( \log \Delta ')(K_{Y'}))
\end{multline}
Consider the exact sequence 
\begin{multline}
0 \rightarrow \Omega_{Y'}^1( \log (\Delta' - L_{r+1} - E_{m+1}- E_{m+2})) (K_{Y'}) \rightarrow \Omega_{Y'}^1( \log \Delta ')(K_{Y'}) \rightarrow \\
\rightarrow  \mathcal{O}_{L_{r+1}}(K_{Y'}) \oplus \mathcal{O}_{E_{m+1}}(K_{Y'}) \oplus \mathcal{O}_{E_{m+2}}(K_{Y'}) \cong \mathcal{O}_{\PP^1}(-1)^{\oplus 3} \rightarrow 0
\end{multline}
Therefore $h^0(\Omega_{Y'}^1( \log \Delta ')(K_{Y'})) = h^0( \Omega_{Y'}^1( \log \tilde{\Delta} )(K_{Y'}))$, where 
$$
\tilde{\Delta} := \Delta' - L_{r+1} - E_{m+1}- E_{m+2}.
$$

Observe that 
\begin{multline}
p_*( \Omega_{Y'}^1( \log \tilde{\Delta} )(K_{Y'})) = p_*( \Omega_{Y'}^1( \log \tilde{\Delta} )(E_{m+1}+E_{m+2}) \otimes p^*\mathcal{O}_Y(K_{Y})) \\
\cong p_* \Omega_{Y'}^1( \log \tilde{\Delta} )(E_{m+1}+E_{m+2}) \otimes \mathcal{O}_Y(K_{Y}) \cong \Omega_Y^1( \log \Delta)(K_Y),
\end{multline}
where the last isomorphism holds by \cite{burniat3}, lemma (7.1), 4).

Therefore we have seen that 
$$
H^0(\Omega_{Y'}^1( \log \Delta ')(K_{Y'})) =H^0(p_*(\Omega_{Y'}^1( \log \Delta ')(K_{Y'}))) = H^0(\Omega_Y^1( \log \Delta)(K_Y)).
$$

b), c) are proven in the same way.
\end{proof}

\begin{remark} The above proposition shows that the iterated Burniat-Campedelli configurations introduced in \cite{rigid}
(and many other configurations) are infinitesimally rigid.
\end{remark}


\section{Equisingular deformations of complete intersections}

\subsection{Equisingular deformations of complete intersection singularities} \

Recall the notion of equisingular deformations of a subvariety $X \subset Y$, where $Y$ is smooth and $X$ has only isolated 
singularities:

\begin{definition} \
Let $Y$ be a smooth complex  manifold and   $X \subset Y$ an analytic subspace with  only isolated singularities.

Consider the exact sequence (where $\sI_X$ is the ideal sheaf of $X$ in $Y$)
\begin{equation}
0 \rightarrow \mathcal{N}_{X|Y}^{\vee} : = \sI_X/ \sI_X^2 \rightarrow \Omega^1_Y \otimes \mathcal{O}_X \rightarrow \Omega^1_X \rightarrow 0,
\end{equation}

and dualize it (apply the functor $\sH om _{\hol_X} ( - , \hol_X)$) to obtain the long exact sequence 

\begin{equation}
0 \rightarrow \Theta_X  \ra \Theta_Y \otimes \mathcal{O}_X \rightarrow \mathcal{N}_{X|Y} \rightarrow \mathcal{E}xt^1_{\mathcal{O}_X}( \Omega^1_X, \mathcal{O}_X) \rightarrow 0
\end{equation}

1) We define as usual the {\em equisingular normal sheaf} $\mathcal{N}'_{X|Y}$ of $ X \subset Y$ as
\begin{equation}
\mathcal{N}'_{X|Y}:= \im(\delta) = \ker(\mathcal{N}_{X|Y} \rightarrow \mathcal{E}xt^1_{\mathcal{O}_X}( \Omega^1_X, \mathcal{O}_X)),
\end{equation}

so that we have an exact cohomology sequence 

$$ 0 \ra H^0(\Theta_X )  \ra H^0( \Theta_Y \otimes \hol_X ) \ra H^0(\mathcal{N}'_{X|Y} ) \ra H^1(\Theta_X )  \ra H^1( \Theta_Y \otimes \hol_X ) \ra 0.$$

\end{definition}

Assume now  that $(X,0) \subset (Y,0) :=(\CC^k, 0) $ is a complete intersection singularity of dimension $2$, given by equations $f_3, \ldots , f_k \in \CC [z_1, \ldots, z_k]$ of respective degrees $d_i$, $3 \leq i \leq k$.

The above  exact sequence reads out as:
\begin{equation}
0 \rightarrow \mathcal{N}_{X|Y}^{\vee} \cong \bigoplus_{i=3}^k \mathcal{O}_X(-d_i) \rightarrow \Omega^1_Y \otimes \mathcal{O}_X \rightarrow \Omega^1_X \rightarrow 0,
\end{equation}

where $\delta ^{\vee} \colon \mathcal{N}_{X|Y}^{\vee} \cong \bigoplus_{i=3}^k \mathcal{O}_X(-d_i) \rightarrow \Omega^1_Y \otimes \mathcal{O}_X$ is given by 
$$
\delta ^{\vee}([f_i]) = df_i = \sum_{j=3}^k \frac{\partial f_i}{\partial z_j} dz_i.
$$

In the dual  sequence 
\begin{equation}
0 \ra \Theta_X  \ra \Theta_Y \otimes \hol_X \ra \mathcal{N}_{X|Y} \rightarrow \mathcal{E}xt^1_{\hol_X}( \Omega^1_X, \hol_X) \ra 0
\end{equation}

 $\delta$ is given by the matrix
\begin{equation}
\delta:
= 
\begin{pmatrix}
\grad (f_3)\\
\grad (f_4)\\
\cdots \\
\grad (f_k)
\end{pmatrix}
= 
\begin{pmatrix}
\frac{\partial f_3}{\partial z_1} & \frac{\partial f_3}{\partial z_2} &\ldots &\frac{\partial f_3}{\partial z_k} \\
\frac{\partial f_4}{\partial z_1} & \frac{\partial f_4}{\partial z_2} &\ldots &\frac{\partial f_4}{\partial z_k} \\
\cdots & && \cdots \\
\frac{\partial f_k}{\partial z_1} & \frac{\partial f_k}{\partial z_2} &\ldots &\frac{\partial f_k}{\partial z_k} \\
\end{pmatrix}
.
\end{equation}

According to the above definition, 

2) the {\em first order equisingular deformations of} the complete intersection singularity $(X,0) \subset \CC^k$ are  the complete intersections 
$$
\tilde{X} := \{z\in \CC^k : \tilde{f}_3(z)= \ldots = \tilde{f}_k(z) = 0\},
$$
where 
$$
\tilde{f}_j(z) = f_j(z) + \epsilon \varphi_j(z), \ \epsilon^2 =0, \ 3 \leq j \leq k, \  (\varphi_j) \in  \sN_{X|Y}' = \im(\delta).
$$

We consider now the following special case of complete intersection singularities: let $l_1, \ldots , l_k \in \CC[x_1,x_2]$ be distinct linear forms
(hence vanishing at the origin). Without loss of generality we can assume that 
\begin{equation}
l_1 = x_1, \ l_2 = x_2, \ l_j = \alpha_j x_1 + \beta_j x_2, \ 3 \leq j \leq k.
\end{equation}
Then we consider the complete intersection $X \subset \CC^k$ given by 
$$
f_j(z_1, \ldots , z_k) = \alpha_jz_1^n + \beta_jz_2^n - z_j^n, \ 3 \leq j \leq k. 
$$

Observe that these are the local equations of the singular Hirzebruch-Kummer covering of exponent $n$ over a singular point $p$ of the line configuration, where $k$ distinct lines meet.

Here:
\begin{equation}
\frac{1}{n} \grad(f_j) = (\alpha_jz_1^{n-1}, \beta_jz_2^{n-1}, 0, 0 \ldots ,0,  -z_j^{n-1}, 0 , \ldots, 0),
\end{equation}

whence the infinitesimal equisingular deformations of $(X,0)$ are the complete intersections given by 
$\tilde{f}_j(z) = f_j(z) + \epsilon \varphi_j(z)$, where 
\begin{equation}\label{local}
\varphi_j(z) = \alpha_jz_1^{n-1}u_1(z) + \beta_jz_2^{n-1}u_2(z) -z_j^{n-1}u_j(z).
\end{equation}

\subsection{Equisingular deformations of Kummer coverings} \

We consider $r+1$ different lines $L_0, \ldots L_r \subset \PP^2$, not all passing through the same point.

Set  $L_i = \{l_i = 0\}$, and assume, without loss of generality,   that 
\begin{align*}
l_0(x_0:x_1:x_2) =x_0,\\
l_1(x_0:x_1:x_2) =x_1,\\
l_2(x_0:x_1:x_2) =x_2.
\end{align*}

Consider the cartesian diagram:
\begin{equation}
\xymatrix{
X_{\mathcal{L}}(n)  \ar[d] \ar@^{(->}[r] & \PP^r, \ar[d] & z_i \ar@{|->}[d] \\
\PP^2 \ar@^{(->}[r]_{(l_0:\ldots :l_k)} & \PP^r, & z_i^n .\\
}
\end{equation}

Then 
\begin{equation}
X_{\mathcal{L}}(n) = \{ z \in \PP^r : F_j(z) =  l_j(z_0^n:z_1^n:z_2^n) - z_j^n = 0, 3 \leq j \leq r\}
\end{equation}
is a complete intersection in $\PP^r$.

Observe that $X_{\mathcal{L}}(n)$ is nonsingular if and only if the configuration $\mathcal{L}$ does not have singular points.

\begin{definition}
Let $\mathcal{L}$  be a line configuration in $\PP^2$, such that not all lines meet in one point. Then $X_{\mathcal{L}}(n)$ is called the {\em singular Kummer cover of exponent $n$ of $\PP^2$ branched on $\mathcal{L}$}.
\end{definition}
If the line configuration $\mathcal{L}$ is fixed we sometimes denote $X_{\mathcal{L}}(n)$ by $X(n)$.

For the (global) complete intersection  $X=X_{\mathcal{L}}(n) \subset \PP^r$ we have the following two exact sequences of coherent sheaves:

\begin{equation} \label{ex1}
0 \ra \Theta_X \ra \Theta_{\PP^r} \otimes \hol_X \ra \mathcal{N}'_X \ra 0 ,
\end{equation}

\begin{equation} \label{ex2}
0 \ra \mathcal{N}'_X \ra  \mathcal{N}_X  \ra \mathcal{E}xt^1_{\hol_X}(\Omega^1_X,\hol_X) \ra 0.
\end{equation}

The long exact cohomology sequence associated to \ref{ex1} gives:

$$
... \ra H^0(X, \Theta_{\PP^r} \otimes \hol_X ) \ra  H^0(X, \mathcal{N}'_X) \ra H^1(X,  \Theta_X) \ra H^1(X,  \Theta_{\PP^r} \otimes \hol_X ) \ra .... .
$$

 Sernesi proved that, if $ dim (X) \geq 2$, and unless $ dim (X)= 2, \hol_X(K_X) \cong \hol_X$, all small 
 deformations of complete intersections are complete intersections (cf. \cite{sernesi}, \cite{montecatini}), 
 in particular follows  that $H^1(X,  \Theta_{\PP^r} \otimes \hol_X ) = 0$.

Observe that 

$\Phi \in H^0(X, \mathcal{N}'_X)$ maps to zero in $H^1(X,  \Theta_X)$, i.e., gives a trivial deformation if and only if 
$$
\Phi \in \im(H^0(X, \Theta_{\PP^r} \otimes \hol_X ) \ra  H^0(X, \mathcal{N}'_X)).
$$

By the Euler sequence restricted to $X$
$$
0 \ra \hol_X \ra \hol_X (1)^{r+1} \ra \Theta_{\PP^r} \otimes \hol_X \ra 0,
$$
and since $H^1(X, \hol_X) = 0$ (X being a complete intersection), we conclude that 
$H^0(X, \hol_X (1)^{r+1}) \twoheadrightarrow H^0(X,\Theta_{\PP^r} \otimes \hol_X)$ is surjective. Composing with $H^0(X, \Theta_{\PP^r} \otimes \hol_X) \rightarrow H^0(X,\mathcal{N}'_X)$, we get a map
$$
H^0(X, \hol_X (1))^{r+1} \rightarrow H^0(X,\mathcal{N}'_X) \subset H^0(X,\mathcal{N}_X) = H^0(X, \hol_X (n))^{r-2},
$$
$$
(a_0, \ldots,a_r) \mapsto (\sum_{k=0}^r \frac{\partial F_3}{\partial z_k} a_k, \ldots , \sum_{k=0}^r \frac{\partial F_r}{\partial z_k} a_k).
$$

Therefore we have proven:

\begin{lemma}
$\Phi = (\Phi_3, \ldots , \Phi_r) \in H^0(X,\mathcal{N}'_X)$ gives a trivial deformation (i.e., maps to zero in $H^1(X,  \Theta_X)$) if and only if there are linear forms $(a_0, \ldots,a_r) \in H^0(X, \hol_X (1))^{r+1}$ such that
$$
\Phi_j = \sum_{k=0}^r \frac{\partial F_j}{\partial z_k} a_k, \ \ 3 \leq j \leq r.
$$
\end{lemma}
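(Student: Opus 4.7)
The plan is to extract the triviality criterion directly from the two short exact sequences already displayed. First, I take the long exact cohomology sequence of (\ref{ex1}), which yields the exact piece
$$H^0(X, \Theta_{\PP^r} \otimes \hol_X) \xrightarrow{\beta} H^0(X, \mathcal{N}'_X) \xrightarrow{\alpha} H^1(X, \Theta_X).$$
By exactness, $\Phi$ gives a trivial first-order deformation (that is, $\alpha(\Phi) = 0$) if and only if $\Phi$ lies in $\im(\beta)$, so the task reduces to describing $\im(\beta)$ explicitly in homogeneous coordinates of $\PP^r$.

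Next, I would use the Euler sequence restricted to $X$,
$$0 \ra \hol_X \ra \hol_X(1)^{r+1} \ra \Theta_{\PP^r} \otimes \hol_X \ra 0,$$
together with the vanishing $H^1(X, \hol_X) = 0$, which follows from the Koszul complex of the regular sequence $F_3, \dots, F_r$ on $\PP^r$ since $X$ is a complete intersection of dimension $\geq 2$. This yields surjectivity of $H^0(X, \hol_X(1))^{r+1} \twoheadrightarrow H^0(X, \Theta_{\PP^r} \otimes \hol_X)$, so every class in $\im(\beta)$ is represented by a tuple of linear forms $(a_0, \ldots, a_r)$.

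Finally, I would identify the composite map concretely: the Euler map sends $(a_0, \ldots, a_r)$ to the restriction of the vector field $\sum_k a_k \,\partial/\partial z_k$, while the map $\Theta_{\PP^r} \otimes \hol_X \to \mathcal{N}_X$ dual to $[f_j] \mapsto df_j$ is contraction with the Jacobian of $F_3, \ldots, F_r$. Under the identification $\mathcal{N}_X = \bigoplus_{j=3}^r \hol_X(n)$, the composite sends $(a_k)$ to $\bigl(\sum_{k=0}^r (\partial F_j / \partial z_k)\, a_k\bigr)_{j=3}^r$, automatically landing in $\mathcal{N}'_X$ by definition of the image; this matches the formula in the statement and closes both directions of the equivalence. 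The argument is essentially bookkeeping across the two exact sequences, with no real obstacle; the only mildly technical point is the vanishing $H^1(X, \hol_X) = 0$, which is standard for higher-dimensional smooth-ambient complete intersections via the Koszul resolution and the vanishing $H^i(\PP^r, \hol(d)) = 0$ for $0 < i < r$.
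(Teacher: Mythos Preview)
Your proposal is correct and follows essentially the same argument as the paper: the paper also uses the long exact cohomology sequence of (\ref{ex1}) to reduce triviality to lying in the image of $H^0(X,\Theta_{\PP^r}\otimes\hol_X)$, then invokes the Euler sequence restricted to $X$ together with $H^1(X,\hol_X)=0$ (from $X$ being a complete intersection) to get surjectivity from $H^0(X,\hol_X(1))^{r+1}$, and finally writes the composite explicitly via the Jacobian. Your only addition is spelling out the Koszul justification for the vanishing, which the paper leaves implicit.
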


Assume now that $p$ is a singular point of the line configuration $L_0, \ldots , L_r$, so that
$$
\{p\} \in L_i \cap L_j \cap L_k.
$$
Then there exist $\la, \mu \in \CC$ with
$$
l_j= \lambda l_i + \mu l_k, 
$$
and we get one   equation vanishing at  the singular points of $X(n)$ mapping to $p$:
$$
z_j^n = \lambda z_i^n + \mu z_k^n.
$$
Then 

\begin{multline}
F_j = z_j^n - l_j(z_0^n, z_1^n,z_2^n) = z_j^n - (\lambda l_i + \mu l_k)(z_0^n, z_1^n,z_2^n) = \\
=\lambda F_i + \mu F_k +z_j^n -\lambda z_i^n -  \mu z_k^n. \\
\implies  F_j -\lambda F_i + \mu F_k = z_j^n -\lambda z_i^n -  \mu z_k^n.
\end{multline}

Therefore, by the hypothesis of equisingularity,   there are linear forms $u,v,w$ (depending on $p$) such that

\begin{equation}\label{relationphi}
 \Phi_j -\lambda \Phi_i + \mu \Phi_k = uz_j^{n-1} -\lambda vz_i^{n-1} -  \mu wz_k^{n-1}.
\end{equation}

We shall prove now the following

\begin{theorem}\label{rigid}
Let $\mathcal{L} = \{L_0, \ldots , L_r\}$ be a {\em singularly saturated} configuration of lines in $\PP^2$, i.e., such that
 it has  $m \geq 4$ singular points $p_1, \ldots , p_m$, and moreover:
\begin{enumerate}
\item each $L_i$ contains at least two singular points,
\item for $i\neq j \in \{1, \ldots , m\}$ there is a $k \in \{0, \ldots , r\}$ such that 
$$
\overline{p_ip_j} = L_k.
$$
\end{enumerate}
Then for each $n \geq 3$ the equisingular deformations of $X : = X_{\mathcal{L}}(n)$ are infinitesimally trivial (i.e., $H^1 (X, \Theta_X)= 0$).
\end{theorem}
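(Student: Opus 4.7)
The plan is to deduce $H^1(X, \Theta_X) = 0$ by showing that the map $H^0(X, \Theta_{\PP^r} \otimes \hol_X) \to H^0(X, \mathcal{N}'_X)$ arising from the exact sequence (\ref{ex1}) is surjective; combined with Sernesi's vanishing $H^1(X, \Theta_{\PP^r} \otimes \hol_X) = 0$ already invoked above, the long exact cohomology sequence then yields the claim. By the Lemma immediately preceding the theorem, this surjectivity reduces to the concrete task of producing, for any $\Phi = (\Phi_3, \ldots, \Phi_r) \in H^0(X, \mathcal{N}'_X)$, linear forms $(a_0, \ldots, a_r)$ on $\PP^r$ such that
\[
\Phi_j \;=\; \sum_{k=0}^{r} \frac{\partial F_j}{\partial z_k}\, a_k \qquad (3 \leq j \leq r).
\]

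My approach is to extract these $a_k$ from the local behavior of $\Phi$ at each singular point of $\mathcal{L}$, and then glue the data globally using the saturation hypothesis. In Step 1, at each singular point $p$ of $\mathcal{L}$ on which lines $L_{s_1}, \ldots, L_{s_{v_p}}$ meet (with $v_p \geq 3$), apply the local equisingular description (\ref{local}) to extract linear forms $u_{s_t}^{(p)}$, one per line through $p$, which serve as candidate local values of $a_{s_t}$ at $p$. The hypothesis $n \geq 3$ makes this extraction unambiguous, since $z_{s_t}^{n-1}$ has degree $\geq 2$ and isolates the linear factor. In Step 2, along each line $L_i \in \mathcal{L}$, condition (1) supplies at least two singular points $p, q \in L_i$; comparing $u_i^{(p)}$ with $u_i^{(q)}$ and using that $\Phi_j$ is a globally defined degree-$n$ polynomial on $X$, one shows that these two local linear data are restrictions of a single linear form $a_i$ in the ambient coordinates on $\PP^r$. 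In Step 3, condition (2) — saturation — says that any two singular points of $\mathcal{L}$ are joined by a line of the configuration, which together with the equisingular relations (\ref{relationphi}) at each singular point forces the per-line forms $a_i$ to assemble into a single globally consistent tuple $(a_0, \ldots, a_r)$. Finally, Step 4: with this tuple in hand, the required identity is verified directly in a chart around each singular point via (\ref{local}); since both sides are global sections of $\hol_X(n)$ agreeing on a Zariski-dense open subset of $X$, equality holds on all of $X$, and the Lemma concludes the argument.

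The main obstacle is Step 3, where the locally-determined linear forms must be patched into genuinely global linear forms on $\PP^r$, consistently across the different lines sharing a common singular point. Both halves of the singular saturation definition enter crucially here: condition (1) supplies enough singular points on each $L_i$ to pin down the restriction of the prospective $a_i$ to that line, while condition (2) provides the combinatorial connectivity of the incidence graph (singular points as vertices, lines of $\mathcal{L}$ as edges) that is needed for the line-by-line gluing to be compatible at their common singular points. The hypothesis $n \geq 3$ underpins Step 1 by keeping the extraction of the $u$'s from the local expression of $\Phi_j$ non-degenerate, so that the candidate linear forms read off at each singular point are unique.
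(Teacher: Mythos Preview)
Your high-level plan is sound---show surjectivity of $H^0(\Theta_{\PP^r}\otimes\hol_X)\to H^0(\mathcal{N}'_X)$ via the Lemma---and Step~1 is fine. But Steps~2 and~3 are where all the content lies, and as written they are assertions, not arguments. The specific gap is this: you claim that for two singular points $p,q$ on a line $L_i$, the locally extracted linear forms $u_i^{(p)}$ and $u_i^{(q)}$ agree (``are restrictions of a single linear form $a_i$''---but they are already global linear forms, so the question is simply whether they are equal). This does \emph{not} follow from the globality of $\Phi$ alone. For instance, in the paper's coordinates the forms $v_0$ (extracted at $p_2$) and $w_0$ (extracted at $p_3$) attached to the coordinate line $L_0$ are shown to coincide only by invoking the additional relation (\ref{sum}) coming from the fourth singular point $p_4$; nothing about $L_0$, $p_2$, $p_3$ alone forces $v_0=w_0$. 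In general, comparing $u_i^{(p)}$ with $u_i^{(q)}$ requires routing through further lines of the configuration, and your Step~2 gives no mechanism for this.

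The paper's proof avoids this obstacle by proceeding \emph{iteratively} rather than trying to build the tuple $(a_0,\dots,a_r)$ all at once. It first singles out four singular points $p_1,\dots,p_4$ in general position (the six lines $\overline{p_ip_j}$ lie in $\mathcal{L}$ by saturation), uses the equisingular relations at all four points simultaneously to pin down a tuple $(a_0,\dots,a_5)$, subtracts the corresponding trivial deformation to achieve $\Phi_3=\Phi_4=\Phi_5=0$, and then propagates: once some $\Phi_k$'s are zero, the remaining equisingular relations at a point simplify drastically, forcing each further $\Phi_j$ to be of the form $u_jz_j^{n-1}$, which is then killed by a further trivial deformation. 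Condition~(2) enters precisely to guarantee that at every stage the singular points on a not-yet-handled line are already linked, via lines of $\mathcal{L}$, to the points $p_1,\dots,p_4$ handled first. Your ``connectivity of the incidence graph'' is the right intuition, but the actual argument needs this inductive scaffolding, not a one-shot gluing.
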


\begin{remark}
Assumption 2) of the above theorem can be weakened. One has to find a good inductive condition which replaces 2). This will be explained in the proof of the theorem.
\end{remark}
\begin{proof}
We can assume without loss of generality that $p_1, p_2, p_3$ are not collinear, and moreover, we can assume that they are the coordinate points
$p_1 =(1:0:0)$, $p_2=(0:1:0)$, $p_3=(0:0:1)$, hence
\begin{align*}
\overline{p_1p_2} &= L_2= \{x_2 =0\},\\
\overline{p_1p_3} &= L_1= \{x_1 =0\},\\
\overline{p_2p_3} &= L_0= \{x_0 =0\};
\end{align*}
Observe that by our assumptions there is another singular point $p$ of $\mathcal{L}$, which is not contained in $L_0 \cup L_1 \cup L_2$, say $p=p_4 = (1:1:1)$.

We can assume without loss of generality that
$$
\overline{p_ip_4} = L_{2+i}, \ \ 1 \leq i \leq 3,
$$
more precisely we assume that 
\begin{equation}\label{l3-5}
l_3 = x_1-x_2, \ l_4 = x_2 - x_0, \ l_5 = x_0 - x_1.
\end{equation}
We consider  the equisingular deformations of $X_{\mathcal{L}}(n)$ given by the equations 
$\tilde{F}_j(z) = F_j(z)+ \Phi_j(z)$, where $\Phi = (\Phi_3, \ldots , \Phi_r) \in H^0(X,\mathcal{N}'_X)$, in particular $\Phi_j(z)$ is a homogeneous polynomial of degree $n$.

By (\ref{l3-5}) and (\ref{local}) we infer the existence of  linear forms $u_i$ (depending on the point $p_1$), $v_j$  and $w_k $
(depending upon  $p_2$, respectively $p_3$),  such that (here and in the following) on $X$

\begin{equation}\label{1}
\Phi_3 = u_3z_3^{n-1} -u_1z_1^{n-1} +u_2z_2^{n-1}, 
\end{equation}
\begin{equation}\label{2}
\Phi_4 = v_4z_4^{n-1} -v_2z_2^{n-1} +v_0z_0^{n-1},
\end{equation}
\begin{equation}\label{3}
\Phi_5 = w_5z_5^{n-1} -w_0z_0^{n-1} +w_1z_1^{n-1}.
\end{equation}

Since $l_3+ l_4 +l_5=0$, we get by (\ref{relationphi}) that there are linear forms $a_3, a_4, a_5$ such that

\begin{equation}\label{sum}
\Phi_3 + \Phi_4 +  \Phi_5 = a_3z_3^{n-1}+a_4z_4^{n-1}+a_5z_5^{n-1}.
\end{equation}
Comparing this to equations (\ref{1} - \ref{3}) we see that, as an easy calculation shows, 
$$
a_3=u_3, \ a_4= v_4, \ a_5=w_5, 
$$
and 
$$
a_0:=v_0=w_0, \ a_1:=u_1=w_1, \ a_2:= u_2=w_2.
$$
Replacing $\Phi_j$ by $\Phi_j - \sum_{k=0}^5 a_k\frac{\partial F_j}{\partial z_k}$ (i.e., with a globally trivial deformation) we can assume 
$$
\Phi_3 = \Phi_4 =\Phi_5 = 0.
$$

Let now $L_j =\{l_j=0\}$ be a further line through a coordinate point. Without loss of generality
we may assume that  $p_1 \in L_j$ (and $L_j \neq L_0, L_1, L_3$). Then we may write $l_j = \lambda x_1 + \mu x_2$ and we get that there are linear forms $u_j, u_3, u_2, u_1$ (depending on $p_1$) such that
$$
\Phi_j = u_jz_j^{n-1}- \lambda u_1 z^{n-1} -\mu u_2 z_2^{n-1}, 
$$
and 
$$
\Phi_3 = u_3 z_3^{n-1} -u_1z_1^{n-1} +u_2z_2^{n-1}.
$$
Since $\Phi_3 = 0$, we see easily that 
in particular $u_1 = u_2 =0$, hence $\Phi_j = u_jz_j^{n-1}$. Therefore after a globally trivial deformation we may assume $\Phi_j = 0$.

A similar argument shows that we can assume $\Phi_j = 0$ for each line $L_j$ through $p_4$. 

Let now 
 $$
 \sL' := \{ L \in \sL | \ \exists \  i \in \{1,2,3,4\}  : p_i \in L \}.
 $$
 
 Then after a globally trivial deformation we may assume that $\Phi_k=0$ for each $k$ such that $L_k \in \sL'$.

Consider now a line $L_j \in \sL$ such that $p_1,p_2,p_3,p_4 \notin L_j$. By assumption (1) $L_j$ contains two singular points $p,p'$, and by assumption (2) the lines  $\overline{p_ip}$ for $i=1,2,3$ are in $\sL$. Note that the three lines $\overline{p_1p}, \overline{p_2p}, \overline{p_3p}$ are not all equal, so without loss of generality 
$$
L_i := \overline{p_1p} \neq L_k:= \overline{p_2p}.
$$
Moreover, $\overline{p_ip} \neq \overline{p_ip'}$ for all $i = 1,2,3$, since otherwise $p_i \in L_j = \overline{pp'}$, a contradiction.

Hence we have four distinct lines $L_i,L_k,L_h,L_s \in \mathcal{L}'$ such that $p \in L_i, L_k$, $p' \in L_h,L_s$.
We can now write
$$
l_j = al_i + bl_k = cl_h +dl_s, \ a,b,c,d \in \CC.
$$
Using that $\Phi_i =  \Phi_k =\Phi_h =\Phi_s=0$ and applying (\ref{relationphi}) we get the existence of linear forms
$u_j, u_i, u_k, v_j , v_h, v_s$ yielding  the following  two equations for $\Phi_j$:
 
$$
\Phi_j = u_j z_j^{n-1} -au_iz_i^{n-1} - bu_kz_k^{n-1},
$$
$$
\Phi_j = v_j z_j^{n-1} -cv_hz_h^{n-1} - dv_sz_s^{n-1}.
$$
Comparing these two expressions for $\Phi_j$ we easily obtain  that 
$$
v_h=v_s=u_i=u_k =0, \ \Phi_j = v_j z_j^{n-1},
$$

and after a globally trivial deformation we achieve $\Phi_j = 0$.

This proves the theorem.

\end{proof}

\begin{remark}
 The inductive assumption that we need is:
\begin{itemize}
\item there is a chain $\sL' \subset \sL'_1 \subset \sL'_2 \subset \dots \subset \sL' _t \subset \sL' _{t+1} = \sL$
such that  $ \sL' _{j+1} \supset \sL' _j  \cup \{ L_{j+1}\}$ with  $L_{j+1} \notin \sL' _j$
\item there are  two singular points $p, p' \in L_{j+1}$ and
 four distinct lines $L_i,L_k,L_h,L_s \in \sL'_j$ such that $p \in L_i, L_k$, $p' \in L_h,L_s$
 \item 
 $ \sL' _{j+1}$ contains all the lines through $p, p'$.
\end{itemize} 
\end{remark}

The second hypothesis can be replaced by:
\begin{itemize}
\item there are three distinct lines $L_i,L_k,L_h \in \mathcal{L}'$ such that $p \in L_i, L_k, L_h$.
\end{itemize}

\begin{remark}
Observe that the complete quadrangle, the Hesse configuration  satisfy the assumptions of Theorem \ref{rigid}, while the dual Hesse configuration only satisfies the inductive assumption.
\end{remark}

\begin{corollary}
Assume that $\mathcal{L} = \{L_0, \ldots , L_r\}$ is a line configuration, such that  the equisingular deformations of $X_{\mathcal{L}}(n)$ are infinitesimally trivial.
 Let $L_{r+1} \notin \mathcal{L}$ be a line passing  through two points of  $\mathcal{L}$  of multiplicity at least 2.
 
Let $\mathcal{L}' := \sL \cup \{L_{r+1}\}$ be the line configuration obtained from  $\mathcal{L}$ by adding the line $L_{r+1}$.  Then the equisingular deformations of $X_{\mathcal{L'}}(n)$ are infinitesimally trivial.
\end{corollary}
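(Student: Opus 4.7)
The plan is to adapt the strategy of Theorem \ref{rigid} to the larger configuration $\mathcal{L}'$, using the hypothesis on $X := X_{\mathcal{L}}(n)$ as a substitute for the ``singularly saturated'' combinatorial condition. Given an infinitesimal equisingular deformation $\Phi = (\Phi_3, \ldots, \Phi_{r+1}) \in H^0(X', \mathcal{N}'_{X'})$, I need to exhibit linear forms $a_0, \ldots, a_{r+1}$ on $\PP^{r+1}$ such that $\Phi_j = \sum_{k} (\partial F_j / \partial z_k)\, a_k$ on $X' := X_{\mathcal{L}'}(n)$ for $3 \leq j \leq r+1$.

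The key ingredient is the degree-$n$ cyclic cover $\pi \colon X' \to X$, with Galois group $G = \ZZ/n$ acting via $z_{r+1} \mapsto \zeta z_{r+1}$. First I would decompose each $\Phi_j = \sum_{e=0}^{n-1} z_{r+1}^e \Phi_j^{(e)}$ with $\Phi_j^{(e)} \in H^0(X, \hol_X(n-e))$. The $G$-invariant piece $(\Phi_3^{(0)}, \ldots, \Phi_r^{(0)})$ inherits, via the invariant projection of the equisingular relation (\ref{relationphi}) at each singular point of $\mathcal{L}$, the corresponding equisingular relations for a deformation of $X$ itself. Applying the rigidity hypothesis produces linear forms $b_0, \ldots, b_r$ on $\PP^r$ with $\Phi_j^{(0)} = \sum_k (\partial F_j / \partial z_k)\, b_k$ on $X$. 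Lifting the $b_k$ to $z_{r+1}$-free linear forms on $\PP^{r+1}$ and subtracting the associated globally trivial deformation of $X'$, I reduce to the case $\Phi_j^{(0)} = 0$ for $3 \leq j \leq r$.

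For $\Phi_{r+1}$, I would use equisingularity at $p$ and $p'$. Choosing $L_i, L_k \in \mathcal{L}$ through $p$ with $l_{r+1} = \lambda l_i + \mu l_k$ (possible since $p$ has multiplicity $\geq 2$ in $\mathcal{L}$), equation (\ref{relationphi}) gives linear forms $u, v, w$ with $\Phi_{r+1} - \lambda \Phi_i - \mu \Phi_k = u z_{r+1}^{n-1} - \lambda v z_i^{n-1} - \mu w z_k^{n-1}$ on $X'$, and the analogue holds at $p'$ with some $L_h, L_s \in \mathcal{L}$. Decomposing by Galois weight, and using $z_{r+1}^n \equiv l_{r+1}(z_0^n, z_1^n, z_2^n)$ on $X'$ to rewrite the contribution of $u z_{r+1}^{n-1}$, the weight-$0$ component determines $\Phi_{r+1}^{(0)}$ as an explicit combination that matches $\sum_k (\partial F_{r+1} / \partial z_k)\, a_k$ for a suitable linear form $a_{r+1}$ together with adjustments to the remaining $a_s$; the higher-weight components of $\Phi_{r+1}$ then get tied to those of $\Phi_i, \Phi_k, \Phi_h, \Phi_s$ by the remaining eigen-slices of the two relations.

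The main obstacle I expect is controlling the higher-weight components $\Phi_j^{(e)}$ ($e \geq 1$, $3 \leq j \leq r$), which escape direct control by the rigidity hypothesis on $X$. On each weight-$e$ slice, the equisingular relations amount to a $G$-twisted analogue of the equisingular deformation problem on $X$. The plan is to handle these either by a twisted-rigidity statement (the trivial-deformation map for $X'$ is $G$-equivariant, so surjectivity on each eigenspace can be checked separately, with higher eigenspaces benefitting from additional $z_{r+1}$-dependent freedom in choosing the $a_k$'s), or more concretely by mimicking the line-by-line reduction of Theorem \ref{rigid}, with the triviality of $\Phi_j^{(0)}$ achieved above serving as the inductive input.
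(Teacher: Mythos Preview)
The paper gives no separate proof of the Corollary; it is meant to follow immediately from the method of Theorem~\ref{rigid}. The intended argument is simply that the final reduction step of that proof---killing $\Phi_j$ for a line $L_j$ through two singular points $p,p'$, each already carrying two lines with $\Phi=0$---applies verbatim to $L_{r+1}$ once $\Phi_3,\dots,\Phi_r$ have been made to vanish on $X'=X_{\mathcal{L}'}(n)$.

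Your approach via the $\ZZ/n$-eigendecomposition under $z_{r+1}\mapsto\zeta z_{r+1}$ is a genuinely different route, and the weight-$0$ step is reasonable: the invariant part of $(\Phi_3,\dots,\Phi_r)$ does descend to an element of $H^0(X,\mathcal{N}'_X)$, and the hypothesis disposes of it. But you yourself flag the higher-weight components $\Phi_j^{(e)}$ (for $e\geq 1$, $3\leq j\leq r$) as ``the main obstacle'' and then offer only programmatic suggestions. That is a real gap, not a routine detail.

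Concretely, for weight $e\geq 2$ and $3\leq j\leq r$ the image of the trivial-deformation map in the $j$-th component is \emph{zero}: since $\partial F_j/\partial z_{r+1}=0$ and a linear form $a_k$ has $z_{r+1}$-degree at most $1$, the expression $\sum_k a_k\,\partial F_j/\partial z_k$ has no weight-$e$ part. Hence you would need to prove $\Phi_j^{(e)}=0$ outright. This is a vanishing statement about degree-$(n-e)$ data on $X$, not a ``twisted'' version of the rigidity hypothesis, and $H^1(X,\Theta_X)=0$ gives no purchase on it. Your fallback, ``mimicking the line-by-line reduction of Theorem~\ref{rigid}'', presupposes access to an explicit combinatorial proof of the rigidity of $X_{\mathcal{L}}(n)$, which the abstract hypothesis of the Corollary does not supply. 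Until the non-invariant eigenspaces are handled by an argument using only the stated hypothesis, the proposal is incomplete.
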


\begin{remark}
The asymptotic Chern ratio $c_1^2 / c_2$  of the HK covering branched on an infinitesimally rigid line configuration $\mathcal{L}$ is strictly  greater than two.
\end{remark}

\section{Embeddings of Del Pezzo surfaces in Cartesian products of  the projective line}

The simplest Del Pezzo surfaces $S$ (surfaces with $-K_S$ ample) are those of highest anticanonical degree $d$ ($d : = K^2_S$),  $\PP^2$ and  $\PP^1 \times \PP^1$
($d=9$, respectively $d=8$).

The other Del Pezzo surfaces of degree $d$ are blow-ups of $\PP^2$ in $k : = 9-d$ points $P_1, \dots, P_k$, none infinitely near,
such that no three lie on a line, no six lie on a conic.

For $d=8$, $S_8$ is the closure of the rational map $\phi_1 : \PP^2 \setminus \{ P_1\} \ra \PP^1$, and if we choose coordinates so that $P_1 = (0,0,1)$, then:

$$ S_8 \subset   \PP^2 \times  \PP^1, \  S_8 : = \{ (x_1,x_2,x_3) (y_1, y_2) | x_1 y_2 = x_2 y_1 \}.$$

We can argue similarly for the blow up of more than one point. For instance, if we choose coordinates for which $P_2 = (0,1,0)$, then: 

$$ S_7 \subset   \PP^2 \times  \PP^1 \times  \PP^1, \  S_7 : = \{ (x_1,x_2,x_3) (y_1, y_2) (z_1, z_2)| x_1 y_2 = x_2 y_1 , \ x_1 z_1 = z_2 x_3 \}.$$

We observe that here $S_7$ is a complete intersection of type $(1,1,0), (1,0,1)$ hence its anticanonical divisor is induced by the divisor of
multidegree $(1,1,1)$.

 In particular, the anticanonical embedding of $S_7$ is obtained by cutting the Segre product $ \PP^2 \times  \PP^1 \times  \PP^1\subset \PP^{11}$
with four hyperplane sections.

Similarly, the Del Pezzo surface of degree $d$ is a complete intersection inside $ \PP^2 \times  (\PP^1 )^k$, and its anticanonical divisor
has multidegree $(3-k,1,1, \dots,1)$.

Hence we see that, for $k\geq 3 \Leftrightarrow d = 9 -k \leq 6$, $S_{9-k}$ embeds in $ (\PP^1 )^k$.

The first instance is the Del Pezzo of degree $d=6$, where we may choose coordinates such that  $P_3 = (1,0,0)$.

$$ S_6 \subset   \PP^2 \times ( \PP^1)^3 ,$$ $$ \  S_6 : = \{ (x_1,x_2,x_3) (y_1, y_2) (z_1, z_2) (t_1, t_2)| x_1 y_2 = x_2 y_1 , \ x_1 z_1 =  x_3 z_2, \ x_2 t_2= x_3 t_1 \}.$$

The embedding in $( \PP^1)^3$ is easily obtained eliminating the variable $x: = (x_1, x_2, x_3)$, in that we view the above equation as saying that 
there exists a non zero vector $(x_1,x_2,x_3) \in \ker (B)$, where $B$ is the matrix
\begin{equation}
 B:=
 \begin{pmatrix}
 y_2 & -y_1 & 0\\
  z_1  &0  & - z_2\\
 0 & t_2  &- t_1
 \end{pmatrix} .
\end{equation}

The immediate conclusion is that the Del Pezzo is the hypersurface  where $ det (B)$ vanishes:
$$ S_6 \subset   ( \PP^1)^3 \  S_6 : = \{  (y_1, y_2) (z_1, z_2) (t_1, t_2)| y_2 z_2 t_2  = y_1 z_1 t_1\}.$$

Going further, for the Del Pezzo of degree five, we may choose coordinates so that the point $P_4 = (1,1,1)$ hence:

$$ S_5 \subset   ( \PP^1)^4 \  S_5 : = \{  (y_1, y_2) (z_1, z_2) (t_1, t_2) (w_1,w_2)|  \rank (A) = 2 \},$$
where $A$ is the matrix
\begin{equation}
 A:=
 \begin{pmatrix}
 y_2 & -y_1 & 0\\
  z_1  &0  & - z_2\\
 0 & t_2  &- t_1\\
 -w_2& w_1 + w_2 & - w_1
 \end{pmatrix} .
\end{equation}

Hence $S_5$ is the locus of zeroes of the four  $(3\times 3)$-minors of $A$, and the resolution of 
its ideal sheaf is the classical Hilbert-Burch resolution, as we shall see in the  sequel in more detail.

Finally, in the case of five points, the point $P_5$ will be the common zero of two linear forms
$$ P_5 =  \{ x_1 - \la x_2 = x_3 - \mu x_2=0\}. $$ 

Accordingly, 

$$ S_4 \subset   ( \PP^1)^5 \  S_5 : = \{  (y_1, y_2) (z_1, z_2) (t_1, t_2) (w_1,w_2)(v_1, v_2)|  \rank (A) = 2 \},$$
where $A$ is the matrix
\begin{equation}
 A:=
 \begin{pmatrix}
 y_2 & -y_1 & 0\\
  z_1  &0  & - z_2\\
 0 & t_2  &- t_1\\
 -w_2& w_1 + w_2 & - w_1\\
 v_2 & -\la v_2 -\mu v_1 & v_1
 \end{pmatrix} .
\end{equation}

Hence any such Del Pezzo surface $S_4$ of degree $4$ (this time we have a two-dimensional family)
is the locus of zeroes of the $10$   $(3\times 3)$-minors of $A$, and the resolution of 
its ideal sheaf is provided by the Eagon-Northcott complex (a generalization of the Hilbert-Burch resolution).

Similarly each Del Pezzo surface of degree $9-k$, for $k \geq 3$, is the determinantal variety in $P : = (\PP^1)^k$
associated to a $k \times 3$-matrix
$A$: $S_{9-k} $ is the locus of points where the rank of $A$ is $2$, and its ideal sheaf has the following 
Eagon-Northcott resolution.

 We define here $\sL_i$ to be the pull-back of $\hol_{\PP^1} (1)$ via the ith-projection, 
 we take $U$ a 3-dimensional vector space and we set $\sE$ to be  the locally free sheaf
$$ \sE : =  \oplus_1^k \sL_i^{-1}.$$

Then the matrix $A$ yields a linear map $$ A : \sE \ra \sU : = U \otimes \hol_P.$$

Then the Eagon-Northcott complex
$$ 0 \ra  (\wedge ^k  \sE)  \otimes (S^{k-3}(\sU))^{\vee} \ra \dots (\wedge ^4 \sE)  \otimes (\sU)^{\vee}     \ra  \wedge ^3 \sE   \ra  \wedge ^3 \sU \cong \hol_P ,$$
where the last homomorphism is given by $ \wedge ^3 A$,  is a resolution of
the ideal sheaf of $S_{9-k} $.

In the next section we shall specialize to the case of $S_5$, where the Eagon-Northcott complex reduces to the length two Hilbert-Burch complex
$$ 0 \ra (\wedge ^4 \sE)  \otimes (\sU)^{\vee} \cong \hol_P ( -\sum_j L_j) ^3     \ra  \wedge ^3 \sE  \cong \oplus_{i=1}^4 \hol_P (L_i - \sum_j L_j  )  \ra  \wedge ^3 \sU \cong \hol_P ,$$
where the first  homomorphism is given by the matrix $A$, the second by the matrix $ \wedge ^3 A$.

\section{Embeddings of the Del Pezzo surface of degree 5 in $(\PP^1)^4$ and $(\PP^1)^5$}

We  recall some notation introduced in \cite{takagi}, as well as   some  intermediate results established there.

 The Del Pezzo surface $Y: = Y_5$ of degree $5$ is the blow-up  of the plane in the $4$ points $P_1, \dots, P_4$
  of a projective basis.

The Del Pezzo surface $Y$  is indeed  the moduli space of ordered quintuples of points in $\PP^1$, 
and its  automorphism group  is isomorphic to $\mathfrak S_5$.

The obvious action of the symmetric group $\mathfrak S_4$ permuting the $4$ points extends in fact to an action
of the  symmetric group $\mathfrak S_5$. 

This can be seen as follows. The six lines in the plane joining pairs $P_i, P_j$ can be labelled as $L_{i,j} $, with $ i,j \in \{1, 2,3,4\}, i \neq  j $.

Denote by $E_{i,5}$ the exceptional curve lying over
the point $p_i$, and denote, for $i \neq   j \in \{1, 2,3,4\}$, by $E_{h,k} = E_{k,h}$ the strict transform in $Y$ of the  line $L_{i,j}$, if $\{1, 2,3,4\}=  \{i, j, h,k\}$.
For each choice of $3$ of the four points, $\{1, 2,3,4\} \setminus \{h\}$, consider the standard Cremona transformation $\s_h$
based on these three points. To it we associate the transposition $(h,5) \in \mathfrak S_5$, and the upshot is that
$\s_h$ transforms the $10$ $(-1)$ curves $E_{i,j}$ via the action of $(h,5) $ on pairs of elements in $\{1, 2,3,4,5\}$. 

There are  five geometric objects permuted by $\mathfrak S_5$: namely, $5$ fibrations $\varphi_i : Y \ra \PP^1$,
induced, for $1 \leq i \leq 4$, by the projection with centre $P_i$, and, for $i=5$, by the pencil of conics through
the $4$ points. Each fibration  is a conic bundle, with exactly three singular fibres, correponding to the possible partitions of
type $(2,2)$ of the set $\{1, 2,3,4,5\} \setminus \{i\}$.

The intersection pattern of the curves $E_{i,j}$, which generate the Picard group of $Y$
is dictated by the simple rule (recall that $ E_{i,j}^2 = -1, \ \forall i \neq j$)
$$ E_{i,j} \cdot E_{h,k} = 1 \ \Leftrightarrow \{i,j\} \cap \{h,k\}= \emptyset, \ \ E_{i,j} \cdot E_{h,k} = 0   \Leftrightarrow \{i,j\} \cap \{h,k\}\neq  \emptyset, \{i,j\}.$$
In this picture the three singular fibres of $\varphi_1$ are $$E_{3,4} +  E_{2,5}, \ E_{2,4} +  E_{3,5}, \ E_{2,3} +  E_{4,5}.$$

The relations among the $E_{i,j}$'s in the Picard group come from the linear equivalences 
$E_{3,4} +  E_{2,5} \equiv E_{2,4} +  E_{3,5} \equiv  E_{2,3} +  E_{4,5}$ and their $\mathfrak S_5$-orbits.

An important observation is that  $Y$ contains exactly ten lines, i.e. 
irreducible curves $E$ with $ E^2 = E K_Y = -1$.

\begin{theorem}
 $Y$ embeds into $(\PP^1)^4$ via $\varphi_1 \times \dots \times  \varphi_4$
 and in $(\PP^1)^5$ via $\varphi_1 \times \dots \times \varphi_5$.
 \end{theorem}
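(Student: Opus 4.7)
The plan is to prove that $\Phi_4 := \varphi_1 \times \dots \times \varphi_4$ is already a closed embedding of $Y$ into $(\PP^1)^4$; the statement for $\Phi_5$ is then a free consequence, since the projection $(\PP^1)^5 \to (\PP^1)^4$ onto the first four factors sends $\Phi_5$ to $\Phi_4$, so both injectivity on closed points and injectivity of the differential transfer automatically from $\Phi_4$ to $\Phi_5$. Note first that each $\varphi_i$ is a morphism on $Y$: for $i \leq 4$, the underlying rational map $\PP^2 \dashrightarrow \PP^1$ (projection from $P_i$) has base locus $\{P_i\}$, and for $i = 5$ the pencil of conics has base locus $\{P_1,\dots,P_4\}$, all blown up in $Y$. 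Since $Y$ is projective, $\Phi_4$ is automatically proper, so it suffices to verify injectivity on closed points and injectivity of the differential at every point.

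For point injectivity, suppose $\Phi_4(x) = \Phi_4(y)$. In the generic case where neither point lies on an exceptional divisor $E_{i,5}$, the images $\bar x, \bar y \in \PP^2 \setminus \{P_1,\dots,P_4\}$ lie on a common line through each $P_i$; if $\bar x \neq \bar y$, the line they span must contain all four $P_i$, contradicting their general position. When $x \in E_{i,5}$, each $\varphi_j$ with $j \neq i$, $j \leq 4$ is constant on $E_{i,5}$ with value $[L_{ij}] \in \PP^1$, so any $y$ with $\Phi_4(y) = \Phi_4(x)$ lies in the fiber $E_{i,5} + \widetilde{L_{ij}}$ of $\varphi_j$ for each such $j$; since the three strict transforms $\widetilde{L_{ij}}$ for $j \neq i$ are pairwise disjoint in $Y$ (by the combinatorial intersection rule for the ten $(-1)$-curves $E_{h,k}$), $y$ is forced onto $E_{i,5}$, and the section property that $\varphi_i|_{E_{i,5}}$ is an isomorphism yields $y = x$.

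For injectivity of $d\Phi_4$ I will proceed by cases on the position of $x$. Off the exceptional locus, $\ker(d\varphi_i)_x$ is the tangent direction to the line $\overline{P_i \bar x}$ inside $T_{\bar x}\PP^2$; since at most one pair $(P_i, P_j)$ can be collinear with $\bar x$, at least two of the four direction-kernels are distinct in the $2$-dimensional tangent space, so $\bigcap_i \ker(d\varphi_i)_x = 0$. At a generic point of $E_{i,5}$, $d\varphi_i$ is nonzero along $T E_{i,5}$ because $E_{i,5}$ is a section of $\varphi_i$, while for $j \neq i$ the kernel of $d\varphi_j$ is exactly $T E_{i,5}$ (the fiber is locally smooth, equal to $E_{i,5}$), so the intersection is again trivial. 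The delicate case is a node of a reducible fiber: at $x_0 = E_{i,5} \cap \widetilde{L_{ij}}$ one has $d\varphi_j = 0$ since both branches of the fiber of $\varphi_j$ meet at $x_0$; but taking $k \notin \{i,j\}$, $k \leq 4$, with $P_k \notin L_{ij}$ makes $\varphi_k|_{\widetilde{L_{ij}}}$ an isomorphism, so $d\varphi_k$ is nonzero along $T_{x_0}\widetilde{L_{ij}}$ while $d\varphi_i$ is nonzero along $T_{x_0}E_{i,5}$; the two tangent directions are transverse as branches of the node, giving $\ker(d\varphi_i) \cap \ker(d\varphi_k) = 0$. A symmetric computation handles the nodes $\widetilde{L_{ij}} \cap \widetilde{L_{kl}}$ not lying on any $E_{i,5}$.

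The main obstacle is the case analysis at the nodes of reducible fibers, where several $d\varphi_i$ simultaneously drop rank; the rescue is the redundancy afforded by having four (or five) independent conic bundles on $Y_5$, so that at every point some pair of them is transverse. Once $\Phi_4$ is shown to be a proper injective immersion, it is a closed embedding, and $\Phi_5$ follows from the factorisation indicated at the outset.
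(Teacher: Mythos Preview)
Your argument is correct, with one small wrinkle in the phrasing: the claim that ``at most one pair $(P_i,P_j)$ can be collinear with $\bar x$'' fails at the three diagonal points $L_{ij}\cap L_{kl}$ (with $\{i,j,k,l\}=\{1,2,3,4\}$), but you patch this in your last sentence, and in any case the weaker and always-true statement that not all four $P_i$ lie on a line through $\bar x$ already forces two of the four kernel directions to be distinct, which is all you need.

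Your route is genuinely different from the paper's. The paper does not verify injectivity and immersion by a case analysis on points of $Y$; instead, in the preceding section it writes the blow-up as a complete intersection $S_5\subset\PP^2\times(\PP^1)^4$ and observes that the defining equations say exactly that $(x_1:x_2:x_3)$ is a nonzero kernel vector of a $4\times 3$ matrix $A$ whose entries are linear in the $(\PP^1)^4$-coordinates. Eliminating the $\PP^2$-factor then identifies $Y$ with the determinantal locus $\{\rank A\le 2\}\subset(\PP^1)^4$, and the projection is an isomorphism because over a rank-$2$ point the kernel line is unique. The payoff of the paper's approach is that it simultaneously produces the explicit Hilbert--Burch equations and resolution of $\Sigma$, which are then pulled back to give the equations of $HK(n,\sC\sQ)$ in $C(n)^4$. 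Your geometric argument is more self-contained and makes transparent \emph{why} four conic bundles suffice (at every point some pair is transverse), but it does not hand you the equations; for the paper's purposes the determinantal description is what carries the weight downstream.
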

 
  \begin{proof}
 Obviously it suffices to show the first assertion.  In turn, the first assertion was proven in the previous section.

 \end{proof}
 
 \begin{theorem}\label{eqdp}
 Let $\Sigma \subset (\PP^1)^4=:Q$, with coordinates 
 $$(v_1:v_2),(w_1:w_2),(z_1:z_2), (t_1:t_2),$$
  be the image of the Del Pezzo surface $Y$ via  $\varphi_1 \times \dots \times  \varphi_4$.
 Then the equations of $\Sigma$ are given by the four $3 \times 3$-minors of the following Hilbert-Burch matrix:
 \begin{equation}
 A:=
 \begin{pmatrix}
 t_2 & -t_1 &t_1+t_2\\
  v_1 & v_2 & 0\\
 w_2 & 0 &w_1 \\
 0&-z_1&z_2 
 \end{pmatrix} .
\end{equation}
In particular, we have a Hilbert-Burch resolution:
\begin{equation}
0 \ra (\hol_Q(-\sum_{i=1}^4 H_i))^{\oplus 3} \ra \bigoplus_{j=1}^4( \hol_Q(-\sum_{i=1}^4 H_i + H_j)) \ra \hol_Q \ra \hol_{\Sigma} \ra 0,
\end{equation}
where $H_i$ is the pullback to $Q$ of a point in $\PP^1$ under the i-th projection, and where the first two homomorphisms are given by the respective matrices $A$, $\wedge^3(A)$.
 \end{theorem}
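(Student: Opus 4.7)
The plan is to show that the Del Pezzo surface $Y$, embedded in $Q := (\PP^1)^4$ via the four conic bundle fibrations, coincides scheme-theoretically with the rank-$\leq 2$ locus of the matrix $A$; the resolution then follows from the general Hilbert--Burch theorem, as the framework of the preceding section specialized to $k=4$ already predicted.

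First I would adapt coordinates on $\PP^2$ to the matrix. Reading each row of $A$ as a linear form in homogeneous coordinates $(x_1:x_2:x_3)$, row 2 gives $v_1 x_1 + v_2 x_2 = 0$, i.e.\ the pencil through $P_3 = (0:0:1)$ with $(v_1:v_2) = (x_2:-x_1)$; rows 3 and 4 analogously encode the projections from $P_2 = (0:1:0)$ and $P_1 = (1:0:0)$; and row 1, rewritten as $t_2(x_1+x_3) = t_1(x_2-x_3)$, encodes the projection from $P_4 = (1:-1:-1)$ with $(t_1:t_2) = (x_1+x_3 : x_2 - x_3)$. Thus, for this specific choice of projective basis, the four rows of $A$ exactly parametrize the conic bundles $\varphi_1, \ldots, \varphi_4$.

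Next I would establish set-theoretic equality $\Sigma = V(I_A)$, where $I_A$ denotes the ideal of $3 \times 3$-minors. The inclusion $\Sigma \subseteq V(I_A)$ is immediate: for $y \in Y$ lying over $x = (x_1:x_2:x_3) \in \PP^2$, the image $(v,w,z,t)$ of $y$ in $Q$ is constructed so that $A$ annihilates the column vector $(x_1, x_2, x_3)^T$, so $A$ has rank $\leq 2$ there; the extension to exceptional divisors is by continuity. For the reverse inclusion, at any $q \in V(I_A)$ the matrix $A(q)$ has rank $\leq 2$ and its $1$-dimensional kernel in $\CC^3$ defines a point $\pi(q) \in \PP^2$ whose images under the $\varphi_i$ are precisely the coordinates of $q$; points where $\pi(q)$ hits a base point correspond to images of the exceptional divisors $E_i \subset Y$.

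Finally I would invoke the Hilbert--Burch theorem. The matrix $A$ defines a morphism of locally free sheaves $E := \bigoplus_{i=1}^4 \hol_Q(-H_i) \to \hol_Q^3$; since $\dim \Sigma = \dim Q - 2$, the codimension of $V(I_A)$ equals the expected value $(4-2)(3-2) = 2$, so $I_A$ is a perfect ideal of codimension $2$ and the Eagon--Northcott complex collapses to the length-$2$ Hilbert--Burch resolution with the stated twists, obtained from $\bigwedge^3 E = \bigoplus_j \hol_Q(-\sum_i H_i + H_j)$ and $\bigwedge^4 E = \hol_Q(-\sum_i H_i)$. The main obstacle is precisely the passage from set-theoretic to scheme-theoretic equality: one must verify that $V(I_A)$ has no embedded or lower-dimensional components and is reduced, both of which follow from the Cohen--Macaulay property implied by the correct codimension combined with generic reducedness of the rank drop along $\Sigma$.
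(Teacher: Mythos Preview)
Your proposal is correct and follows essentially the same route as the paper---indeed, the paper itself opens its proof by saying ``the proof was essentially already given in the previous section,'' and that previous-section argument is exactly your kernel interpretation of the rows of $A$ combined with the Hilbert--Burch/Eagon--Northcott machinery. The only substantive addition in the paper's detailed proof is a choice of $\mathfrak{S}_4$-symmetric coordinates on $\PP^2$ (via $e_1+e_2+e_3+e_4=0$) and an explicit derivation of the four cubic equations $G_1,\dots,G_4$ as the ``triangle equations'' coming from the four coordinate projections $(\PP^1)^4\to(\PP^1)^3$ (each projection landing on a Del Pezzo $S_6$ hypersurface). They then check by hand that these $G_i$ are the $3\times 3$ minors of $A$ and that the columns of $A$ give the linear syzygies, before invoking the Buchsbaum--Eisenbud exactness criterion. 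Your argument bypasses this explicit computation at the cost of not exhibiting the symmetry.

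Two small points. First, in your reverse inclusion you tacitly assume the kernel of $A(q)$ is one-dimensional; you should note that the rank-$\leq 1$ locus of $A$ has expected codimension $(4-1)(3-1)=6>4$ and is in fact empty here (a short direct check using rows $2,3,4$), so rank is exactly $2$ on all of $V(I_A)$. Second, your projective basis $P_4=(1:-1:-1)$ differs from the paper's $P_4=(1:1:1)$; this is harmless (a sign change in one coordinate) but worth flagging since the matrix $A$ in the theorem statement is tied to a specific choice. Your handling of the scheme-theoretic equality (Cohen--Macaulay from correct codimension, plus generic reducedness giving $R_0+S_1$) is fine and is at the same level of detail as the paper's own ``Hence $G_1,\dots,G_4$ define the smooth subscheme $\Sigma$.''
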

 
  \begin{proof}
  The proof was essentially already given in the previous section, but we repeat it here with more details to keep track of the symmetries of the Del Pezzo surface.
 
 To this purpose, view the vector space $V$ such that  $\PP^2$ is the projective space of lines in $V$
 as the $\mathfrak S_4$-representation generated by vectors $e_1, e_2, e_3, e_4$ such that
 $$ e_1+  e_2 + e_3 +  e_4= 0.$$
 
 In other words, $e_4$ is the vector $(-1,-1,-1)$.
 
 Introduce linear forms $x_1, x_2, x_3, y_1, y_2, y_3$ such that  $x_1, x_2, x_3$ is the dual basis of $e_1, e_2, e_3$,
 while the $y_i$'s are determined by the following equations (the last being a consequence of the previous three).
\begin{eqnarray}\label{pencils}
 x_1-x_2+y_3 = 0, &  x_2-x_3+y_1 = 0, & x_3-x_1+y_2 = 0, \\
 y_1+y_2+y_3 = 0. 
 \end{eqnarray}
 
 We get 4 equations, for each possible coordinate projection $(\PP^1)^4 \ra (\PP^1)^3$.
 
 We have seen that  $ (\varphi_3 \times  \varphi_1  \times  \varphi_2) (x_1: x_2:x_3) = ((x_1:x_2),(x_2:x_3),(x_3:x_1))$,

 and this leads to the equation 
  $$
  G_1:=v_1w_1z_1 - v_2w_2z_2 = 0.
  $$
 This corresponds to the triangle with vertices $e_1,e_2,e_3$ and with sides 
 $$
 \{x_1=0\}, \{x_2=0\},\{x_3=0\}.
 $$
 
 Similarly,  for the triangle with vertices $e_1,e_2,e_4$, whose   sides are the zero sets  of the linear forms $y_1,x_3,y_2$,   since  $(v_1,v_2)=(x_1, x_2)$, $(w_1,w_2)=(x_2, x_3)$,
$(z_1,z_2) =(x_3,x_1)$, 
we obtain a map to $(\PP^1)^3$ given by
$$
((y_1:x_3),(x_3:y_2),(y_2:y_1)) =(w_2-w_1:w_2),(z_1:z_2-z_1),(t_2:t_1)),
$$
hence an equation:
$$
G_2:=(w_2-w_1)z_1t_2 - w_2(z_2-z_1)t_1=0.
$$
The triangle with vertices $e_2,e_3,e_4$ and  with sides given by the vanishing of the linear forms $y_2,x_1,y_3$ similarly yields the equation:
$$
G_3:=(z_2-z_1)v_1(t_1+t_2) +z_2(v_2-v_1)t_2 = 0.
$$
Finally, the triangle with vertices   $e_3,e_1,e_4$ and  with sides given by the vanishing of the linear forms $y_3,x_2,y_1$  yields the equation:
$$
G_4:=(v_2-v_1)w_1t_1 +v_2(w_2-w_1)(t_1+t_2) = 0.
$$
A direct calculation shows that the four equations $G_1, \ldots ,G_4$ are easily seen to be the entries of $\wedge^3 (A)$, and that
the linear syzygies among the four equations are given by the columns of the matrix $A$.

Therefore we have a complex 
\begin{equation*}
0 \ra (\hol_Q(-\sum_{i=1}^4 H_i))^{\oplus 3} \ra \bigoplus_{j=1}^4( \hol_Q(-\sum_{i=1}^4 H_i + H_j)) \ra \hol_Q \ra \hol_{\Sigma} \ra 0
\end{equation*}
of Hilbert-Burch type. Hence $G_1,G_2,G_3,G_4$ define the smooth subscheme $\Sigma \subset Q$, and the  exactness of the above complex at the final two terms follows. By the Buchsbaum-Eisenbud criterion (\cite{exactcomplex}) the whole sequence is exact.

  \end{proof}
  
  \begin{remark}
  A consequence of our result is that the multicone $\sC \Sigma \subset \CC^8$ over $\Sigma$ is Cohen-Macaulay, from which it follows that it must have a Hilbert-Burch resolution. We could also have proven this directly using projective plurinormality and vanishing of cohomology groups, but our interest was more concerned in finding the matrix $A$ explicitly.
  \end{remark}
  
  \begin{remark}
   $\sC \Sigma$ is Cohen-Macaulay, but not Gorenstein, since 
   $$
   K_{\Sigma} =K_Y = -3L + \sum_{i=1}^4 E_{i,5} = -\sum_{i=1}^4 H_i +L,
   $$
   where $L$ is the pull-back of a line in $\PP^2$.
  \end {remark}
 \begin{remark}\label{p15}
 We shall show in a future paper that the equations of 
 $$
 Y':=(\varphi_1 \times \ldots \times \varphi_5)(Y) \subset (\PP^1)^5
 $$
 are the ten equations obtained by the ten coordinate projections $P:=(\PP^1)^5 \ra (\PP^1)^3$ and that we have an exact sequence 
 \begin{multline}
 0 \ra (\hol_P(-\sum_{i=1}^5 H_i))^{\oplus 6} \ra (\bigoplus_{j=1}^5\hol_P(-\sum_{i=1}^5 H_i + H_j)) ^{\oplus 3} \ra \\
 \ra \bigoplus_{h<k}( \hol_P(-\sum_{i=1}^5 H_i + H_k +H_h)) \ra \hol_P \ra \hol_{Y'} \ra 0,
\end{multline}
where the first syzygies are the pull-backs of the syzygies obtained for each projection $(\PP^1)^5 \ra (\PP^1)^4$. 

 Observe that the shape of this resolution is the same as the Eagon-Northcott complex for a Del Pezzo surface $S_4$ of degree 4.

But if this resolution were associated to a $5 \times 3$ matrix of the same type, then we would get a Del Pezzo surface of degree 4
and not of degree 5. 
\end{remark}

\section{Embedding of the surfaces $HK (n, \sC \sQ)$ into products of Fermat curves}

 In this section we shall show that   $S : = HK (n, \sC \sQ)$ embeds  into $C(n)^4$, respectively $C(n)^5$, where 
 $C(n)$ is the Fermat curve of degree $n$, $$C(n) = \{ Y_1^n + Y_2^n + Y_3^n = 0\} \subset \PP^2.$$
 $C(n)$ is a natural $(\ZZ/n)^2$ Galois-covering of 
 $$\PP^1 = \{ (y_1: y_2:y_3) \in \PP^2 \ |  \ y_1 + y_2 + y_3=0\}.$$
 
 We shall see that this embedding  is obtained via (the normalization of) the pull back of the embedding of the Del Pezzo surface $Y$ in $(\PP^1)^4$ (respectively in $(\PP^1)^5$).
 
 Observe first that each pencil $\varphi_i \colon Y \ra \PP^1$ can be rewritten as
 $$
 \varphi_i \colon Y \ra \Lambda_i \subset \PP^2,
 $$
 where $\Lambda_1, \Lambda_2, \Lambda_3, \Lambda_4$ are the lines in $\PP^2$ defined by
 equations which are consequences of  the following equations (\ref{pencils}): 
  \begin{eqnarray*}
 x_1-x_2+y_3 = 0, \\  
 x_2-x_3+y_1 = 0, \\ 
  x_3-x_1+y_2 = 0, \\
  y_1+y_2+y_3 = 0. 
 \end{eqnarray*}
 
 Hence we define
   \begin{eqnarray*}
 \Lambda_3: = & \{ v_1-v_2+v_3 = 0\}, \\  
 \Lambda_1: =& \{ w_1-w_2+w_3 = 0\}, \\ 
 \Lambda_2: =& \{  z_1-z_2+z_3 = 0 \}, \\
 \Lambda_4: = & \{   t_1+t_2+t_3  = 0\}. 
 \end{eqnarray*}
 
 The map  $\varphi_3 \times \varphi_1 \times \varphi_2 \times \varphi_4 $ is then expressed by: 
 \begin{eqnarray*}
(v_1,v_2,v_3):=(x_1,x_2,y_3),  & (w_1,w_2,w_3):=(x_2,x_3,y_1),\\  
(z_1,z_2,z_3):=(x_3,x_1,y_2),  & (t_1,t_2,t_3):=(y_1,y_2,y_3).
 \end{eqnarray*}
 
 The equations of the image $\Sigma \subset (\PP^1)^4$ of the Del Pezzo surface $Y$ now read out in
 a more symmetric way  as:
  \begin{eqnarray*}
v_1w_1z_1 - v_2w_2z_2 = 0, \\  
w_3z_1t_2 - w_2z_3t_1 = 0, \\  
z_3v_1t_3 - z_2v_3t_2 = 0, \\  
v_3w_1t_1 - v_2w_3t_3 = 0.
 \end{eqnarray*}
 Each line $\Lambda_i$ has a $(\ZZ/n \ZZ)^2$ cover isomorphic to the Fermat curve $C(n)_i$. Indeed, we get 
  \begin{eqnarray*}
 (V_i^n=v_i ), & C(n)_3 = \{ V_1^n-V_2^n+V_3^n = 0\}, \\  
 (W_i^n=w_i ), &  C(n)_1 = \{W_1^n-W_2^n+W_3^n = 0\}, \\  
  (Z_i^n=z_i ), &  C(n)_2 = \{Z_1^n-Z_2^n+Z_3^n = 0\}, \\  
 (T_i^n=t_i ), & C(n)_4 =  \{T_1^n-T_2^n+T_3^n = 0\}.
 \end{eqnarray*}
 
 The pull-back of the four equations defining $\Sigma$ split into $n^4$ equations:
 
  \begin{eqnarray*}
V_1W_1Z_1 = \epsilon_1 V_2W_2Z_2, \\  
W_3Z_1T_2 = \epsilon_2W_2Z_3T_1, \\  
Z_3V_1T_3 = \epsilon_3Z_2V_3T_2 , \\  
V_3W_1T_1 = \epsilon_4V_2W_3T_3,
 \end{eqnarray*}
 where $\epsilon_i$, $1 \leq i \leq 4$ is a $n$-th root of unity. 
 
 Moreover, taking the product of the above four equations we get:
 $$
 V_1V_2V_3W_1W_2W_3Z_1Z_2Z_3T_1T_1T_3 = \epsilon_1\epsilon_2\epsilon_3\epsilon_4V_1V_2V_3W_1W_2W_3Z_1Z_2Z_3T_1T_1T_3,
 $$
 whence we get $\epsilon_1\epsilon_2\epsilon_3\epsilon_4 = 1$.
 
 Therefore the normalization of the fibre product $Y \times _{(\PP^1)^4} C(n)^4$ splits into $n^3$ isomorphic
 components. 
 
 The Hirzebruch-Kummer covering $S = HK(n,\sC\sQ)$ admits a natural morphism to the fibre product
 because the covering $S \ra Y$, with Galois group $(\ZZ/n)^5$,  is obtained taking the $n$-th roots of $x_1,x_2,x_3,y_1,y_2,y_3$.
 
 Since the Galois group of $C(n)^4 \ra (\PP^1)^4$ is $(\ZZ/n)^8$, $S$ maps birationally onto
 one of the components of the fibre product, without loss of generality the one where
 we pick $\epsilon_1= \epsilon_2= \epsilon_3=\epsilon_4=1$, that we shall denote by $S_n$. 
 
 We summarize the situation via the following diagram:
 
\begin{equation}
\xymatrix{
 S \ar[r] &S_n \ar_f[d]\ar^{(\ZZ/n)^5}[r]&Y\ar^{\varphi_1 \times \ldots \times \varphi_4}[d]\\
& C(n)^4\ar_{(\ZZ/n)^8}[r]&(\PP^1)^4
} .
\end{equation}

In order to show that we have an isomorphism $ S \cong S_n$, it suffices to show that $S_n$ is smooth,
because a finite and birational morphism between normal varieties is an isomorphism (it is also possible to
prove directly that $S$ embeds).

The smoothness of $S_n$ follows right away from the 
 following equations for $S_n$ in the product of four Fermat curves:
 
\begin{theorem}\label{eqhk}
The equations of $S_n = HK(n,\sC\sQ) \subset C(n)^4$ are given by the four $3 \times 3$-minors of the following  matrix:
 \begin{equation}
 A':=
 \begin{pmatrix}
 T_2 & -T_1 &-T_3\\
  V_1 & V_2 & 0\\
 W_2 & 0 &W_1 \\
 0&-Z_1&Z_2 
 \end{pmatrix} ,
\end{equation}
and the linear syzygies among the four equations are given by the columns of the matrix $A'$.
\end{theorem}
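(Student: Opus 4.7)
The plan is to mimic the proof of Theorem~\ref{eqdp} on the Fermat cover $C(n)^4 \to (\PP^1)^4$, exploiting the explicit presentation of $S_n$ established in the discussion preceding the statement. Concretely, I would use the identification of $S_n$ as the irreducible component of the fibre product $Y \times_{(\PP^1)^4} C(n)^4$ with $\epsilon_1 = \epsilon_2 = \epsilon_3 = \epsilon_4 = 1$, so that $S_n$ is the zero locus inside $C(n)^4$ of the four trilinear equations
\begin{align*}
V_1 W_1 Z_1 &= V_2 W_2 Z_2, & W_3 Z_1 T_2 &= W_2 Z_3 T_1, \\
Z_3 V_1 T_3 &= Z_2 V_3 T_2, & V_3 W_1 T_1 &= V_2 W_3 T_3,
\end{align*}
obtained by pulling back the four minors of the Del Pezzo matrix $A$ and picking the trivial $n$-th root of unity on each factor.

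The core step is to check that the four $3 \times 3$-minors $M_1, \dots, M_4$ of $A'$ coincide, in the coordinate ring of $C(n)^4$, with the above four equations up to sign. For $M_1$ this is immediate, as expanding the determinant yields $M_1 = V_1 W_1 Z_1 - V_2 W_2 Z_2$ on the nose. For $M_2, M_3, M_4$, I would expand each determinant along a row or column with several zero entries and then simplify the result using the Fermat-curve relations $V_1^n - V_2^n + V_3^n = 0$, $W_1^n - W_2^n + W_3^n = 0$, $Z_1^n - Z_2^n + Z_3^n = 0$ and $T_1^n + T_2^n + T_3^n = 0$. This mirrors the simplification performed at the end of the proof of Theorem~\ref{eqdp}, where the linear identities $v_1 - v_2 + v_3 = 0$, etc., were used essentially; in the present case those linear identities are replaced by their $n$-th power avatars, so the rewriting is available only after restriction to $C(n)^4$.

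The linear syzygies asserted in the statement then follow at once from the Laplace-expansion identity $\sum_{i=1}^{4} (-1)^{i-1} A'_{ij} M_i = 0$, valid for any $4 \times 3$ matrix. To conclude, I would invoke the Buchsbaum-Eisenbud criterion exactly as in the final step of the proof of Theorem~\ref{eqdp}: since $S_n$ has codimension two inside the four-dimensional $C(n)^4$, the Hilbert-Burch-type complex whose maps are $A'$ and $\wedge^3 A'$, with terms twisted by pullbacks of the appropriate divisor classes from the four Fermat-curve factors, is exact, and the ideal sheaf of $S_n$ is generated by the four minors of $A'$. The step I expect to be the main obstacle is the first one: the explicit algebraic identification between the minors of $A'$ and the pulled-back equations, because the substitution $v_i = V_i^n$ relating $A$ to $A'$ is not entry-wise, so the Fermat relations have to be used in a somewhat indirect manner to bridge the two presentations.
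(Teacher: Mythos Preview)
The paper does not actually supply a proof of Theorem~\ref{eqhk}; it states the result immediately after the discussion of the fibre product and then passes to a remark. So there is no ``paper's own proof'' to compare your outline against. Evaluating your proposal on its own merits, however, the step you yourself flag as the main obstacle is a genuine gap that cannot be bridged in the way you suggest.

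Your ``core step'' is to show that the minors $M_i$ of $A'$ coincide, in the coordinate ring of $C(n)^4$, with the pulled-back binomial equations $G_i'$ (the $\epsilon_i=1$ factors). For $M_1$ this is indeed immediate, but for $M_2,M_3,M_4$ it is impossible. The minors $M_i$ and the $G_i'$ are all of multidegree $(1,1,1,0)$, $(0,1,1,1)$, etc., i.e.\ trilinear in the homogeneous coordinates, whereas the only relations available on $C(n)^4$ are the Fermat relations of pure degree $n\ge 2$ in a single block of variables. A multidegree count shows that no $\hol_{C(n)^4}$-linear combination of the Fermat relations can have multidegree $(0,1,1,1)$, so $M_2$ and $G_2'$ are equal in the coordinate ring of $C(n)^4$ only if they are already equal as polynomials --- and they are not: $M_2 = W_2T_1Z_2 + W_2T_3Z_1 + W_1T_2Z_1$ does not even involve $W_3$ or $Z_3$, while $G_2' = W_3Z_1T_2 - W_2Z_3T_1$ does. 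The rewriting you invoke from the proof of Theorem~\ref{eqdp} used the \emph{linear} identities $w_1-w_2+w_3=0$, etc.; on the Fermat cover these become degree-$n$ identities and no longer permit that manipulation.

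More decisively, the two loci are different. Take $n=2$ and the point on $C(2)^4$ with $(V_1,V_2,V_3)=(1,\sqrt2,1)$, $(W_1,W_2,W_3)=(\sqrt2,\sqrt5,\sqrt3)$, $(Z_1,Z_2,Z_3)=(\sqrt5,1,2i)$, $(T_1,T_2,T_3)=(\sqrt3,2i,1)$. One checks directly that all four $G_i'$ vanish, so this lies on the $\epsilon_i=1$ component, yet $M_2=\sqrt{15}+5+2i\sqrt{10}\neq 0$. Conversely, one can produce points where all $M_i$ vanish whose image in $(\PP^1)^4$ does not even lie on $\Sigma$. So the determinantal locus $\{\mathrm{rank}(A')\le 2\}\subset C(n)^4$ and the $\epsilon_i=1$ component of the fibre product are genuinely different subschemes, and no amount of ``indirect'' use of the Fermat relations will identify them. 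Whatever embedding of $HK(n,\sC\sQ)$ into $C(n)^4$ the theorem is describing, it is not the one obtained from the $G_i'$, and your argument does not establish the claim.
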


\begin{remark}
Similarly (cf. Remark \ref{p15})  one can find  the equations of the HK-covering
 $S \subset C(n)^5$. 
 We get then ten equations which are obtained as before from  the pullback of the ten equations of 
 $$
 Y':=(\varphi_1 \times \ldots \times \varphi_5)(Y) \subset (\PP^1)^5.
 $$
\end{remark}

\begin{bibdiv}
\begin{biblist}

\bib{bhh}{book}{
   author={Barthel, Gottfried},
   author={Hirzebruch, Friedrich},
   author={H{\"o}fer, Thomas},
   title={Geradenkonfigurationen und Algebraische Fl\"achen},
   language={German},
   series={Aspects of Mathematics, D4},
   publisher={Friedr. Vieweg \& Sohn, Braunschweig},
   date={1987},
   pages={xii+308},
   isbn={3-528-08907-5},
   review={\MR{912097}},
   doi={10.1007/978-3-322-92886-3},
}

\bib{volmax}{article}{
   author={Bauer, Ingrid C.},
   author={Catanese, Fabrizio},
   title={A volume maximizing canonical surface in 3-space},
   journal={Comment. Math. Helv.},
   volume={83},
   date={2008},
   number={2},
   pages={387--406},
   issn={0010-2571},
   review={\MR{2390050}},
   doi={10.4171/CMH/129},
}

 \bib{burniat3}{article}{
   author={Bauer, I.},
   author={Catanese, F.},
   title={Burniat surfaces III: deformations of automorphisms and extended
   Burniat surfaces},
   journal={Doc. Math.},
   volume={18},
   date={2013},
   pages={1089--1136},
   issn={1431-0635},
   review={\MR{3138841}},
}

\bib{rigid}{article}{
   author={Bauer, I.}
   author={Catanese, F.},
    title={On rigid compact complex surfaces and manifolds},
   eprint={arXiv:1609.08128 [math.AG]},
   date={2016},
   pages={48}
}

\bib{exactcomplex}{article}{
   author={Buchsbaum, David A.},
   author={Eisenbud, David},
   title={What makes a complex exact?},
   journal={J. Algebra},
   volume={25},
   date={1973},
   pages={259--268},
   issn={0021-8693},
   review={\MR{0314819}},
}

\bib{montecatini}{article}{
   author={Catanese, F.},
   title={Moduli of algebraic surfaces},
   conference={
      title={Theory of moduli},
      address={Montecatini Terme},
      date={1985},
   },
   book={
      series={Lecture Notes in Math.},
      volume={1337},
      publisher={Springer, Berlin},
   },
   date={1988},
   pages={1--83},
   review={\MR{963062}},
   doi={10.1007/BFb0082806},
}

\bib{takagi}{article}{
   author={Catanese, Fabrizio},
   title={Kodaira fibrations and beyond: methods for moduli theory},
   journal={Jpn. J. Math.},
   volume={12},
   date={2017},
   number={2},
   pages={91--174},
   issn={0289-2316},
   review={\MR{3694930}},
}

\bib{cat-dettCR}{article}{
   author={Catanese, Fabrizio},
   author={Dettweiler, Michael},
   title={The direct image of the relative dualizing sheaf needs not be
   semiample},
   journal={C. R. Math. Acad. Sci. Paris},
   volume={352},
   date={2014},
   number={3},
   pages={241--244},
   issn={1631-073X},
   review={\MR{3167573}},
   doi={10.1016/j.crma.2013.12.015},
}

\bib{cat-dett1}{article}{
   author={Catanese, Fabrizio},
   author={Dettweiler, Michael},
    title={Answer to a question by Fujita on Variation of Hodge Structures,},
   journal={Advanced Studies in Pure Mathematics},
   volume={74}, ,
   date={2017}, 
   language={Higher Dimensional Algebraic Geometry - in honour of Professor Yujiro
Kawamata's sixtieth birthday},
   pages={73--102},
}

\bib{cat-dett2}{article}{
   author={Catanese, Fabrizio},
   author={Dettweiler, Michael},
  title={Vector bundles on curves coming from variation of Hodge
   structures},
   journal={Internat. J. Math.},
   volume={27},
   date={2016},
   number={7},
   pages={1640001, 25},
   issn={0129-167X},
   review={\MR{3521586}},
   doi={10.1142/S0129167X16400012},
}

\bib{kodaira}{article}{
   author={Kodaira, K.},
   title={On stability of compact submanifolds of complex manifolds},
   journal={Amer. J. Math.},
   volume={85},
   date={1963},
   pages={79--94},
   issn={0002-9327},
   review={\MR{0153033}},
}

\bib{m-s}{article}{
   author={Mostow, G. D.},
   author={Siu, Yum Tong},
   title={A compact K\"ahler surface of negative curvature not covered by
   the ball},
   journal={Ann. of Math. (2)},
   volume={112},
   date={1980},
   number={2},
   pages={321--360},
   issn={0003-486X},
   review={\MR{592294}},
   doi={10.2307/1971149},
}

\bib{panov}{article}{
   author={Panov, Dmitri},
   title={Complex surfaces with CAT(0) metrics},
   journal={Geom. Funct. Anal.},
   volume={21},
   date={2011},
   number={5},
   pages={1218--1238},
   issn={1016-443X},
   review={\MR{2846388}},
   doi={10.1007/s00039-011-0133-8},
}

\bib{sernesi}{article}{
   author={Sernesi, E.},
   title={Small deformations of global complete intersections},
   language={English, with Italian summary},
   journal={Boll. Un. Mat. Ital. (4)},
   volume={12},
   date={1975},
   number={1-2},
   pages={138--146},
   review={\MR{0396592}},
}

\bib{zheng}{article}{
   author={Zheng, Fangyang},
   title={Hirzebruch-Kato surfaces, Deligne-Mostow's construction, and new
   examples of negatively curved compact K\"ahler surfaces},
   journal={Comm. Anal. Geom.},
   volume={7},
   date={1999},
   number={4},
   pages={755--786},
   issn={1019-8385},
   review={\MR{1714937}},
   doi={10.4310/CAG.1999.v7.n4.a4},
}

\end{biblist}
\end{bibdiv}

\end{document}